\documentclass[11pt]{amsart}

\usepackage{setspace}
\setlength\parskip{\medskipamount}
\setlength\parindent{0pt}
\usepackage{amsthm}

\usepackage{graphicx}

\theoremstyle{plain}
\newtheorem{theorem}{Theorem}
\newtheorem{lemma}[theorem]{Lemma}

\theoremstyle{definition}
\newtheorem{definition}[theorem]{Definition}

\usepackage{times}
\usepackage[T1]{fontenc}
\usepackage[latin1]{inputenc}
\usepackage{amssymb}
\usepackage{amsmath}
\usepackage{algorithm}
\usepackage{algorithmic}

\begin{document}

\title[Perturbed preconditioned inverse iteration for operator EVP]{Perturbed preconditioned inverse iteration for operator eigenvalue problems with applications to adaptive wavelet discretization}

\author[T. Rohwedder]{Thorsten Rohwedder}
\address{Sekretariat MA 5-3\\ Institut f\"ur Mathematik\\ TU Berlin\\ Stra{\ss}e des 17. Juni 136 \\ 10623 Berlin, Germany}
\email{rohwedde@math.tu-berlin.de}
\urladdr{http://www.math.tu-berlin.de/~rohwedde/}

\author[R. Schneider]{Reinhold Schneider}
\email{schneidr@math.tu-berlin.de}
\urladdr{http://www.math.tu-berlin.de/~schneidr/}

\author[A. Zeiser]{Andreas Zeiser}
\address{Sekretariat MA 3-3\\ Institut f\"ur Mathematik\\ TU Berlin\\ Stra{\ss}e des 17. Juni 136 \\ 10623 Berlin, Germany}
\email{zeiser@math.tu-berlin.de}
\urladdr{http://www.math.tu-berlin.de/~zeiser/}

\keywords{elliptic eigenvalue equations, preconditioned inverse iteration, approximate operators, perturbed preconditioned inverse iteration, adaptive space refinement}

\subjclass{65N25, 65J10, 65N55}

\date{July 31, 2009}

\begin{abstract}
  In this paper we discuss an abstract iteration scheme for the calculation of 
  the smallest eigenvalue of an elliptic operator eigenvalue problem. 
  A short and geometric proof based on the preconditioned inverse iteration (PINVIT) 
  for matrices [Knyazev and Neymeyr, (2009)] is extended to the case of operators.  
  We show that convergence is retained up to any tolerance if one only uses approximate 
  applications of operators which leads to the perturbed preconditioned inverse iteration (PPINVIT). 
  We then analyze the Besov regularity of the
  eigenfunctions of the Poisson eigenvalue problem on a polygonal domain, 
  showing the advantage of an adaptive solver to uniform refinement when using a stable wavelet base.
  A numerical example for PPINVIT, applied to the model problem on the L-shaped domain, is shown to reproduce the predicted behaviour.
\end{abstract}

\maketitle

\section{Introduction}
\label{sec:introduction}

 In problems arising from physics and engineering one is interested in finding the smallest eigenvalue and/or corresponding eigenfunction of a given elliptic partial differential equation. Depending on the context, this can be for example the lowest vibrational mode in mechanics, or the ground state energy in chemical structure calculation.

In a standard way an eigenvalue problem is posed in a weak formulation \cite{bo91}. We are looking for the smallest eigenvalue $\lambda \in \mathbb{R}$ and corresponding eigenvector $u \in V$, such that
\begin{eqnarray}
  \label{eq:evp_bilinear}
  a(u,v) = \lambda (u,v), \quad \textnormal{for all } v \in V,
\end{eqnarray}
where $V$ is an appropriate Banach space (e.g. $H^1$) that is a dense and continuously embedded subspace of a Hilbert space $H$ (e.g. $L_2$) with inner product $(\cdot, \cdot)$. We assume that $a$ is a bounded, symmetric and strongly positive bilinear form. Furthermore we assume that the smallest eigenvalue $\lambda_1$ is simple and well separated from the rest of the spectrum.

Using Finite Element Methods (FEM), the eigenvalue problem can be efficiently solved numerically. However when the eigenfunction exhibits singularities, one has to use adaptive strategies to retain efficiency. For the iterative mesh generation one often uses local error estimators or indicators \cite{neymeyr02} or, more recently, also dual weighted residual based goal oriented error estimators \cite{br01,hr01}. In practice these methods perform well, but optimal convergence rates cannot be proven yet \cite{gg07}.

A benchmark for optimal convergence rate is the nonlinear best $N$-term approximation
of the solution \cite{devore98}. Therefore one can expect an adaptive algorithm at best to 
calculate an approximation to the solution with an effort which is proportional to the
degrees of freedom needed for a best $N$-term approximation of the same accuracy.

In this sense, a recent article \cite{drsz08} following the spirit of
\cite{cdd01} showed optimal convergence of a perturbed preconditioned 
inverse operation for the solution of elliptic eigenvalue problems . As a basis one uses the operator formulation 
\begin{eqnarray}
  \label{eq:evp_operator}
  A u = \lambda E u,
\end{eqnarray}
where $A$ corresponds to the bilinear form $a$ of equation (\ref{eq:evp_bilinear}) 
and $E$ results from the $H$-inner product. Then given a preconditioner $P$ for $A$ one determines
\begin{eqnarray}
  \label{eq:evp_pinvit}
  v^{n+1} = v^n +  \alpha P^{-1}(A v^n - \mu(v^n) E v^n)
\end{eqnarray}
up to an accuracy $\varepsilon^n$ in each step, where $\alpha$ is an appropriate step length 
and $\mu$ is the Rayleigh quotient.  The proof of convergence uses the fact that
in a neighborhood of the eigenfunction the iteration is contracting for the part perpendicular
to the corresponding eigenspace.

In the case of matrices a more geometrical proof is known \cite{kn08}. This proof assures
convergence to the smallest eigenvalue for all starting vectors whose Rayleigh quotient 
lies between the first and the second eigenvalue. Therefore the domain of convergence
can be substantially bigger compared to the alternative proof.
The first aim is therefore to extend the proof from the matrix case \cite{kn08} to 
abstract spaces. This will substantially shorten our proof and improve our result from \cite{drsz08}. 
Moreover a more geometric and intuitive interpretation of the iteration is possible. 
We will show that, in order to retain convergence, each operator application has to be performed only with an accuracy proportional to the
current error in the eigenfunction. We will show how the idealized iteration can be performed using only approximate
operator applications which is a common practice in adaptive wavelet methods \cite{cdd01,drsz08}.

Our second aim is to apply the present abstract iteration to wavelet discretization.
We will apply the adaptive wavelet algorithm to the  model problem of a planar
Poisson eigenvalue problem on a polygonal domain. For these ansatz spaces the rate of approximation
of an eigenfunction is determined by the regularity of the function in terms of
Besov spaces. Therefore we will first determine this kind of regularity for the eigenfunctions. It will
be shown that the eigenfunctions can be approximated arbitrarily well 
provided that the wavelets have a sufficient number of vanishing moments. 
This is in contrast to Sobolev regularity, where the biggest inner angle of the domain restricts
the smoothness of the eigenfunctions. Therefore, for domains with reentrant corners,
the adaptive scheme is superior to uniform refinement. 
We conclude the practical part by providing some numerical results for the L-shaped domain.

We will proceed along the following line. First, in Section 2, we fix notation and
will rewrite the eigenvalue problem in terms of operators. After that the
convergence of the abstract iteration including perturbations will be shown in Section 3.
In Section 4 we will concentrate on perturbations resulting from inexact operator 
applications. In the last section we will apply the abstract iteration to the case of
a planar Poisson eigenvalue problem, calculate the regularity of the 
eigenfunctions and provide numerical results for the L-shaped domain.

\section{Operator formulation}
\label{sec:operator_formulation}

In this section we will introduce the notation, state the basic assumptions and pose the problem in terms of operators. This is done using the abstract setting of a Gelfand triple which will simplify the later analysis. 

For that purpose let $(H,(\cdot,\cdot),|\cdot|)$ be a separable Euclidean Hilbert space, and $(V,\|\cdot\|)$ a reflexive and separable Banach space such that $V\subset H$ is dense and continuously embedded in $H$, i.e. 
\begin{eqnarray}
  \label{eq:norm_constant}
  |v| \le \alpha \|v\| \quad \textnormal{ for all } v\in V.
\end{eqnarray}
Denote by $(H^*,|\cdot|_*)$ and $(V^*, \|\cdot\|_*)$ the respective dual spaces of $H$ and $V$. The dual pairing on $V^*$ and $V$ is given by $\langle \cdot, \cdot \rangle: V^* \times V   \rightarrow  \mathbb{R}$.
The spaces $V\subset H \cong H^* \subset V^*$ form a Gelfand triple  by identifying $H^*$ and $H$ by the Riesz representation theorem. 

Assume that we are given a bilinear form $a: V \times V \rightarrow \mathbb{R}$ which is bounded, symmetric and strongly positive. We will consider the problem of finding the smallest eigenvalue and corresponding eigenvector of the weak eigenvalue problem
\begin{eqnarray*}
  a(u,v) = \lambda (u,v) \quad \textnormal{for all } v \in V.
\end{eqnarray*}
Equivalently this equation can also be written in operator form. Through the Riesz representation theorem, the bilinear form $a$ uniquely determines an operator $A: V \rightarrow V^*$ satisfying
\begin{eqnarray*}
  a(u,v) = \langle A u,v\rangle \quad \textnormal{for all } u,v \in V.
\end{eqnarray*}
$A$ is bounded, strongly positive, and symmetric with respect to the dual pairing $\langle \cdot, \cdot \rangle$ in the sense that
\begin{eqnarray*}
  \langle A v, u \rangle = \langle A u, v \rangle, \quad \textnormal{for all } v,u \in V.
\end{eqnarray*}
Hence there exist constants $\sigma_0$ and $\sigma_1$ such that 
\begin{eqnarray}
  \label{eq:A_constants}
  \sigma_0 \|v\|^2 \le \langle A v, v\rangle \le \sigma_1 \|v\|^2 \quad
  \textnormal{for all } v \in V.
\end{eqnarray}
For the formulation of the eigenvalue problem in terms of operators we introduce the mapping
\begin{eqnarray*}
  E: H \rightarrow H^*, \quad v \mapsto (\cdot, v)
\end{eqnarray*}
which is induced by the inner product $(\cdot,\cdot)$ on $H$. For convenience we will also denote its restriction $\left. E \right|_V \in \mathcal{L}(V,V^*)$ by $E$. 

Now an equivalent definition of a weak eigenvalue in terms of operators can be made.
\begin{definition}
  \label{def:eigenvalue}
  Let $A:V\rightarrow V^*$ be a symmetric, bounded and strongly positive operator. $\lambda \in \mathbb{R}$ is a \emph{(weak) eigenvalue} if there exists a $v\in V \setminus \{0\}$, such that
   \begin{eqnarray}
     \label{eq:weak_eigenvalue_operator}
     A v = \lambda E v.
   \end{eqnarray}
   Then $v$ is called a \emph{(weak) eigenvector}. The \emph{(weak) resolvent}  $\rho(A)$ of $A$ is given by all values $\lambda \in \mathbb{R}$, such that $ Av - \lambda E v = f$  is uniquely solvable for all $f\in H^*$ and the inverse mapping is in $\mathcal{L}(H^*,V)$. The \emph{(weak) spectrum} is given  by $\sigma(A) = \mathbb{R} \setminus \rho(A)$. The \emph{Rayleigh quotient} is given by
\begin{eqnarray}
  \label{eq:rayleigh}
  \mu(v) = \frac{\langle A v,v\rangle}{\langle E v,v\rangle} 
  = \frac{\langle A v,v\rangle}{(v,v)}, \quad v\in V.
\end{eqnarray}
\end{definition}

We assume that the lower part of the spectrum is discrete, that is
there exist eigenvalues $0 < \lambda_1 < \ldots < \lambda_N$ of possibly 
higher multiplicity with corresponding finite dimensional eigenspace
\begin{equation*}
  \mathcal{E}_k = \mathrm{span}(u_{k,1}, \ldots u_{k,n_k}), \quad k=1,\ldots,N,
\end{equation*}
while we suppose the rest of the spectrum is bounded from below by $\Lambda > \lambda_N$ and unbounded from above.
If the latter is not the case, $A$ and $E$ are spectrally equivalent and since they induce norms on $V$ and $H$
both spaces coincide. In this paper, we will restrict ourselves to the unbounded case, only noting that 
the other case can be treated with only minor modifications.

Whenever $V$ is compactly embedded in $H$, as it is the case for eigenvalue problems on bounded domains, the spectrum consists only of eigenvalues and the previous assertions are fulfilled automatically. 

The problem we will treat in the sequel can be formulated as follows: Find the smallest eigenvalue $\lambda_1 \in \mathbb R$ 
and a corresponding eigenvector $u_1 \in V\setminus \{0\}$ such that
\begin{eqnarray*}
  A u_1 = \lambda_1 E u_1.
\end{eqnarray*}

\section{Perturbed Preconditioned inverse iteration for operators }
\label{sec:operator_pinvit}

In this section we will state an iterative method for solving operator eigenvalue
problems and show its convergence, formulated in Theorems \ref{thm:rayleigh_reduction} 
and \ref{thm:equivalenceresiduum}. For the construction and the analysis
we can rely on methods developed for generalized symmetric eigenvalue problems. 
In particular we will use an iteration based on the preconditioned steepest 
descent of the Rayleigh quotient. These methods were first analyzed in 
\cite{do80,go76}, and recent developments were achieved in \cite{bpk96,kn03, kn08}.
Generalization of such iteration schemes to operators were also considered in \cite{samokish58}.

Let us again stress that in contrast to the above references 
our iteration will be formulated in the infinite dimensional space $V$, 
not in an associated discretized space. In view of a numerical realization
of such an algorithm approximations are  unavoidable in general.
Therefore we will from the very beginning modify the Preconditioned inverse 
iteration (PINVIT) in allowing for a perturbation in each step, reflecting the finite
dimensional approximation of the involved quantities.
In Section \ref{sec:operator_ppinvit} we will discuss the errors
resulting from this approximate application of operators which 
is common in adaptive wavelet strategies \cite{cdd01}.

To state our iteration scheme, we introduce a preconditioner of the operator $A$, that 
is a symmetric operator $P: V \rightarrow V^*$, such that $A$ and $P$ are spectrally equivalent. 
Up to a scaling, this can be reformulated in the following way (cf. \cite{kamm07}):
There exists a constant $\gamma_P<1$ such that
\begin{eqnarray}
  \label{eq:precond}
  \| \mathrm{Id} - P^{-1} A \|_A \le \gamma_P,
\end{eqnarray} 
In the case of wavelets, the discretization of $P$ will be a diagonal matrix.

Now we can state the basic iteration scheme.
\begin{definition}
  \label{def:ppinvit}
  Let the starting vector $v^0 \in V$, $v^0 \not=0$, be given and define its associated Rayleigh quotient as $\mu^0 = \mu(v^0)$. A \emph{perturbed preconditioned inverse iteration (PPINVIT)} is a sequence of vectors $(v^n)_{n\ge 0}$ and associated Rayleigh quotients ($\mu^n)_{n \ge 0}$ generated by
  \begin{eqnarray*}
    \tilde v^{n+1} &=& v^n - P^{-1} ( A v^n - \mu(v^n) E v^n) + \xi^n, \\
    v^{n+1} &=& | \tilde v^{n+1} | ^{-1} \tilde v^{n+1}, \\
    \mu^{n+1} &=& \mu(v^{n+1}),
  \end{eqnarray*}
  where $(\xi^n)_{n \ge 0} \in V$ are perturbations.
\end{definition}

In order to show convergence for this sheme, we will at first generalize the results of \cite{kn08} 
to the case of this perturbed operator iteration scheme: Provided that
the perturbations are bounded by a multiple of the actual accuracy, the iteration 
generates a sequence of Rayleigh quotients converging to $\lambda_k$ such that 
the error decreases geometrically. Furthermore we will give a bound for the
rate of convergence of the associated subspaces.
In this context, it is obvious that the size of the perturbations have to match the current accuracy 
in the iterands to retain convergence.  It turns out that the eigenvalue residual
\begin{equation*}
  \rho(v) = \| A v - \lambda(v) E v \|_{A^{-1}}/\|v\|_A
\end{equation*}
is in a sense an efficient and reliable error estimator for the angle of the current iterand and the 
eigenvalue spaces. Choosing the perturbations $(\xi^{n})_{n\ge 0}$ in the order of the residuum will
guarantee the convergence of the iteration.
\begin{theorem}
  \label{thm:rayleigh_reduction}
  Let $v \in V$, $v\not=0$, such that the associated Rayleigh quotient $\lambda = \lambda(v)$ fulfills $\lambda_k \le \lambda < \lambda_{k+1}$. Furthermore assume that the perturbations $\xi$ is bounded by
  \begin{equation*}
    \| \xi \|_{A}/\|v\|_A \le \gamma_{\xi} \, \rho(v), \quad \textnormal{where} \quad \gamma = \gamma_P + \gamma_{\xi} < 1.
  \end{equation*}
  Then the next step of PPINVIT (cf. Definition \ref{def:ppinvit}) with starting vector $v$  gives $v'$ and an associated Rayleigh quotient $\lambda'=\lambda(v')$, for which 
  either $\lambda' < \lambda_k$ or $\lambda_k\le \lambda' < \lambda_{k+1}$. In the
  latter case 
  \begin{equation*}
    \frac{\lambda' - \lambda_k}{\lambda_{k+1}-\lambda'} \le q^2(\gamma,\lambda_k,\lambda_{k+1}) 
     \frac{\lambda - \lambda_k}{\lambda_{k+1}-\lambda}. 
  \end{equation*}
  Here $q$ is given by
  \begin{eqnarray*}
     q(\gamma, \lambda_k, \lambda_{k+1}) = 1 - (1-\gamma)(1 - \lambda_k / \lambda_{k+1}).
  \end{eqnarray*}
\end{theorem}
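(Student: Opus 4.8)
The plan is to follow the geometric strategy of Knyazev--Neymeyr \cite{kn08}, adapting it to the Gelfand-triple setting and absorbing the perturbation $\xi$ into the preconditioner estimate. First I would reduce the nonlinear iteration to a linear model problem. The key observation is that the unperturbed step $v - P^{-1}(Av - \lambda(v)Ev)$ can be rewritten as $v' = v - P^{-1}A(v - w)$, where $w := A^{-1}\lambda(v)Ev = \lambda(v)A^{-1}Ev$ is the (idealized) shifted inverse-iteration vector. With the perturbation, $\tilde v' = v - P^{-1}A(v-w) + \xi$; subtracting $w$ and measuring in the $A$-norm gives $\|\tilde v' - w\|_A = \|(\mathrm{Id} - P^{-1}A)(v-w) + \xi\|_A \le \gamma_P\|v-w\|_A + \|\xi\|_A$. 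Using the hypothesis $\|\xi\|_A \le \gamma_\xi \rho(v)\|v\|_A$ and the identity $\rho(v)\|v\|_A = \|Av - \lambda(v)Ev\|_{A^{-1}} = \|A(v-w)\|_{A^{-1}} = \|v-w\|_A$, the perturbation term is itself bounded by $\gamma_\xi\|v-w\|_A$, so $\|\tilde v' - w\|_A \le \gamma\|v-w\|_A$ with $\gamma = \gamma_P + \gamma_\xi < 1$. This is the crucial place where the residual-proportional perturbation bound enters, and it is essentially why the whole scheme is stable; I expect the bookkeeping here to be the main technical obstacle, since one must check that these manipulations are legitimate in $V$, $V^*$ and under the various induced norms.

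Next I would pass to a finite-dimensional reduction. Since $\lambda_k \le \lambda(v) < \lambda_{k+1}$, the relevant spectral information is captured by the span $S$ of $v$, $w$ and the eigenspaces $\mathcal E_1,\dots,\mathcal E_{k+1}$ (more precisely, one projects onto the spectral subspace of $A^{-1}E$ associated with eigenvalues $\ge 1/\lambda_{k+1}$, together with its complement restricted to the plane spanned by $v$ and $w$); on this subspace the operators $A$ and $E$ behave exactly as in the matrix case. Normalisation $v' = |\tilde v'|^{-1}\tilde v'$ does not change the Rayleigh quotient, so it suffices to estimate $\lambda(\tilde v')$. The claim that either $\lambda(\tilde v') < \lambda_k$ or, in the complementary case, the stated contraction holds, is then exactly the content of the finite-dimensional estimate of \cite{kn08}, applied to the vector $\tilde v'$ which lies within $A$-distance $\gamma\|v-w\|_A$ of $w$.

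The heart of the matter is therefore the geometric lemma: if $\tilde v'$ lies in the ball $B_A(w,\gamma\|v-w\|_A)$, then $\mu(\tilde v')$ cannot exceed the value attained at the worst point of that ball, and that worst-case value satisfies $(\mu - \lambda_k)/(\lambda_{k+1} - \mu) \le q^2 \cdot (\lambda(v)-\lambda_k)/(\lambda_{k+1}-\lambda(v))$ with $q = 1 - (1-\gamma)(1-\lambda_k/\lambda_{k+1})$. I would establish this by the same argument as in the matrix setting: parametrise the two-dimensional plane through the eigenvector $u_k \in \mathcal E_k$ and a vector in the orthogonal complement, express the Rayleigh quotient monotonicity in terms of the quantity $t(v) = (\lambda(v)-\lambda_k)/(\lambda_{k+1}-\lambda(v))$, and optimise; the factor $q$ emerges from comparing the slope of the iteration map $\lambda \mapsto 1 - (\lambda/\lambda_{k+1})$ evaluated against the Euclidean ball radius $\gamma$. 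One must also verify the dichotomy: if the contracted iterate drops below $\lambda_k$, the index $k$ has effectively decreased and the estimate for the current $k$ is vacuous, which is why the statement is phrased with the two alternatives.

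Finally I would assemble the pieces: the reduction to the linear model, the perturbation estimate $\|\tilde v' - w\|_A \le \gamma\|v-w\|_A$, invariance of $\mu$ under normalisation, and the geometric lemma together yield the claimed inequality, with $q$ as stated. The one point requiring care beyond \cite{kn08} is that all estimates must be carried out with the $A$-norm on $V$ and the $A^{-1}$-norm on $V^*$ rather than a Euclidean norm; since $A$ is strongly positive and symmetric these norms are equivalent to $\|\cdot\|$ and $\|\cdot\|_*$, and the spectral calculus for $A^{-1}E$ (which is self-adjoint and positive on $H$) supplies the needed diagonalisation, so no genuinely new difficulty arises there.
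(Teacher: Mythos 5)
Your proposal is correct and follows essentially the same route as the paper: the paper first proves a model-case theorem for a bounded operator $B$ on a Hilbert space $X$ (obtaining the perturbed radius by showing $\mu(x)x'$ lies in a ball about $Bx$ of radius $\gamma\|Bx-\mu(x)x\|_X$, then citing the geometric argument of Knyazev--Neymeyr for the unperturbed mini-dimensional analysis) and then reduces the general case to it via $B=A^{-1}E$, $(\cdot,\cdot)_V=\langle A\cdot,\cdot\rangle$, $T=P^{-1}A$. Your version folds the change of variables in up front (ball about $w=\lambda(v)A^{-1}Ev$ in the $A$-norm, which is the paper's ball after multiplying by $\lambda(v)$) but is otherwise the same argument, with the same reliance on the \cite{kn08} lemma for the core geometric estimate.
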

Therefore, the rate of decay is only governed by the eigenvalue gap and the quality of the preconditioner. Note that the presence of a perturbation has the same effect as  applying a preconditioner with a constant $\gamma$ instead of $\gamma_P$. 

Besides the Rayleigh quotient one is also interested in convergence to the eigenspace, which is best described by the convergence of the angle between the iterand and the eigenspace. The following theorem states that the angle for the smallest eigenvalue is  controlled by the magnitude of the eigenvector residual.
\begin{theorem}
  \label{thm:equivalenceresiduum}
  Let $v \in V$, $v\not=0$, such that for the associated Rayleigh quotient 
  $\lambda = \lambda(v)$ fulfills $\lambda_1 \le \lambda < \lambda_{2}$. Denote the angle 
  in terms of the scalar product $\langle A \cdot, \cdot \rangle$ between $v$ and the 
  eigenspace by $\phi_A(v,\mathcal E_1)$. Then 
  \begin{equation*}
    \sin \phi_A(v, \mathcal E_1) \le 
    \sqrt{\frac{\lambda_2}{\lambda_1} \cdot \frac{\lambda(x) - \lambda_1 }{\lambda_2 - \lambda(x)}}.
  \end{equation*}
  Moreover the eigenvector residual controls the angle, i.e.
  \begin{equation*}
    \frac{\lambda_1}{3\lambda(v)}\rho(v)  \le \sin \phi_A(v, \mathcal L_1) \le 
        \frac{\lambda_2}{\lambda_2 -\lambda(v)} \rho(v).
  \end{equation*}
\end{theorem}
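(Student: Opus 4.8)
The plan is to work entirely in the energy inner product $\langle A\,\cdot,\cdot\rangle$, in which the eigenvectors $u_{k,i}$ form an orthogonal basis and $Eu_{k,i} = \lambda_k^{-1} A u_{k,i}$. Decompose the normalized iterand as $v = c_1 w_1 + s_1 w_2$, where $w_1 \in \mathcal E_1$ and $w_2 \perp_A \mathcal E_1$ are unit vectors in the $A$-norm, so that $c_1 = \cos\phi_A(v,\mathcal E_1)$ and $s_1 = \sin\phi_A(v,\mathcal E_1)$. First I would compute the Rayleigh quotient in this decomposition: using $\langle Ev,v\rangle = \langle A A^{-1} E v, v\rangle$ and expanding $A^{-1}E$ in the eigenbasis, one gets $\langle Ev,v\rangle = c_1^2/\lambda_1 + \langle A^{-1}E w_2, w_2\rangle \, s_1^2$, and since $w_2$ lies in the $A$-orthogonal complement of $\mathcal E_1$ (spanned by eigenvectors with eigenvalue $\ge \lambda_2$), we have $\langle A^{-1}E w_2,w_2\rangle \le 1/\lambda_2$. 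Hence $\lambda(v) = 1/\langle Ev,v\rangle \ge \lambda_1\lambda_2/(\lambda_2 c_1^2 + \lambda_1 s_1^2)$. Rearranging this inequality, together with $c_1^2 + s_1^2 = 1$, yields $s_1^2 \le \frac{\lambda_2}{\lambda_1}\cdot\frac{\lambda(v)-\lambda_1}{\lambda_2-\lambda(v)}$, which is the first claimed bound; one also checks here that $\lambda(v)<\lambda_2$ guarantees the right-hand side is well defined and nonnegative.

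For the two-sided residual estimate, I would express the residual $r := Av - \lambda(v)Ev$ in the $A^{-1}$-norm via the same eigenbasis expansion. Writing $v = \sum_{k,i} \beta_{k,i} u_{k,i}$ ($A$-orthonormalized), one has $r = \sum_{k,i} \beta_{k,i}(1 - \lambda(v)/\lambda_k) A u_{k,i}$, so that $\|r\|_{A^{-1}}^2 = \sum_{k,i}\beta_{k,i}^2 (1-\lambda(v)/\lambda_k)^2$, while $\|v\|_A^2 = \sum \beta_{k,i}^2 = 1$. Splitting off the $k=1$ term, the contribution of the components in $\mathcal E_1$ is $c_1^2(1-\lambda(v)/\lambda_1)^2$ and the remaining components (all with $\lambda_k \ge \lambda_2 > \lambda(v)$) contribute a sum of terms $\beta_{k,i}^2(1-\lambda(v)/\lambda_k)^2$ with total weight $s_1^2$. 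For the \emph{upper} bound on $\rho(v) = \|r\|_{A^{-1}}/\|v\|_A$ in terms of $\sin\phi_A$, I would bound each factor $(1-\lambda(v)/\lambda_k)$ for $k\ge 2$ by its largest value; since $\lambda(v) \ge \lambda_1 > 0$ and $\lambda_k \to \infty$, the factor is at most $1$, but a sharper bound using $\lambda(v) \in [\lambda_1,\lambda_2)$ and monotonicity in $\lambda_k$ gives $|1-\lambda(v)/\lambda_k| \le \max(1, \lambda(v)/\lambda_2 - 1 + \dots)$; the clean estimate yielding the stated constant $\lambda_2/(\lambda_2-\lambda(v))$ comes from observing $|1-\lambda(v)/\lambda_k| = (\lambda_k-\lambda(v))/\lambda_k \le 1$ is too crude, so instead I would relate $\|r\|_{A^{-1}}$ directly to $s_1$ via the first part: $\rho(v)^2 \ge c_1^2(\lambda(v)/\lambda_1 - 1)^2 \ge $ (something), and use the already-proven bound $s_1^2 \le \frac{\lambda_2}{\lambda_1}\frac{\lambda(v)-\lambda_1}{\lambda_2-\lambda(v)}$ to convert. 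For the \emph{lower} bound $\frac{\lambda_1}{3\lambda(v)}\rho(v) \le \sin\phi_A$, I would isolate the $\mathcal E_1$-component's contribution: since $\lambda(v)/\lambda_1 \ge 1$ could be close to $1$, the $k=1$ term need not dominate, so the bound must come from the components with $k\ge 2$. There $|1-\lambda(v)/\lambda_k| \ge 1 - \lambda(v)/\lambda_2 \ge 0$ is again too weak when $\lambda(v)$ is near $\lambda_2$; the factor $1/3$ suggests combining a bound like $\lambda(v)/\lambda_1 - 1 \le$ (expression in $s_1$) from the first part with $|1-\lambda(v)/\lambda_k| \le 1$ and the normalization, i.e. $\rho(v)^2 = c_1^2(\lambda(v)/\lambda_1-1)^2 + \sum_{k\ge2}(\cdots) \le (\lambda(v)/\lambda_1-1)^2 + s_1^2 \le (\lambda(v)/\lambda_1)^2\big(\tfrac{\lambda_2}{\lambda_1}\cdot\tfrac{\lambda(v)-\lambda_1}{\lambda_2-\lambda(v)}\big)\cdot(\text{const}) + s_1^2$, and then bounding everything by a multiple of $s_1$.

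The main obstacle I anticipate is pinning down the constants $3$, $\lambda_2/(\lambda_2-\lambda(v))$, and $\lambda_2/\lambda_1$ cleanly: the eigenbasis expansions above give two-sided estimates almost immediately, but the specific numerical constant $1/3$ in the lower bound requires a slightly careful case analysis (or a uniform estimate valid for all $\lambda(v)\in[\lambda_1,\lambda_2)$), balancing the contribution of the eigenvalue-$\lambda_1$ component against the higher modes. I would handle this by first proving the clean inequalities $\rho(v) \asymp \sin\phi_A(v,\mathcal E_1)$ with unspecified constants depending only on $\lambda_1,\lambda_2$, and then optimizing; the identity $\lambda(v)-\lambda_1 = (\lambda_1 \lambda_2 s_1^2 / (\lambda_2 c_1^2 + \lambda_1 s_1^2)) \cdot(\text{factor})$ from the Rayleigh-quotient computation in the first paragraph is the key algebraic tool that converts between the angle and the eigenvalue gap, and feeding it into the residual expansion should produce exactly the stated bounds after elementary manipulation. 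One subtlety worth flagging: the statement writes $\mathcal L_1$ in the second display where $\mathcal E_1$ is presumably meant, so I would use $\mathcal E_1$ throughout.
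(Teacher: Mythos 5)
Your strategy for the first inequality is essentially the paper's (it proves the bound in the reduced model case, with $B=A^{-1}E$ and $\mu=1/\lambda$, via the decomposition $x=x_{\|}+x_{\perp}$), and your direct computation in the $A$-inner product is fine; in fact, if you carry it through carefully you obtain the slightly sharper bound $s_1^2 \le \lambda_2(\lambda(v)-\lambda_1)/\bigl(\lambda(v)(\lambda_2-\lambda_1)\bigr)$ on $\sin^2\phi$ (the paper only bounds $\tan\phi$, which is why the stated prefactor is $\lambda_2/\lambda_1$ and the denominator is $\lambda_2-\lambda(v)$; your bound implies the statement).

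For the residual equivalence, your choice of tool --- expanding $\rho(v)^2=\sum_{k,i}\beta_{k,i}^2\,(1-\lambda(v)/\lambda_k)^2$ in the $A$-orthonormal eigenbasis (or, in full rigor, via the spectral measure of $A^{-1}E$) --- is a genuinely different and, if completed, cleaner route than the paper's, which proves and then invokes the Temple--Kato inequality for the upper bound and a triangle-inequality split $r = (Bx-\mu_1 x)+(\mu_1-\mu(x))x$ together with a geometric lemma from \cite{drsz08} for the lower bound. However, as written your proposal has a concrete gap: you conflate which direction of each inequality needs which bound on the factors $|1-\lambda(v)/\lambda_k|$. You label the bound $\sin\phi \le \frac{\lambda_2}{\lambda_2-\lambda(v)}\rho(v)$ as coming from an \emph{upper} bound on those factors and dismiss the direct route as ``too crude''; in fact it requires the \emph{lower} bound $1-\lambda(v)/\lambda_k \ge 1-\lambda(v)/\lambda_2 = (\lambda_2-\lambda(v))/\lambda_2 > 0$ for $k\ge 2$, which after discarding the $k=1$ term in $\rho^2$ gives $\rho(v)^2 \ge \bigl(1-\lambda(v)/\lambda_2\bigr)^2 s_1^2$ immediately --- no detour through the first part is needed. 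Symmetrically, the lower bound $\frac{\lambda_1}{3\lambda(v)}\rho(v)\le\sin\phi$ needs the \emph{upper} bounds $0<1-\lambda(v)/\lambda_k<1$ for $k\ge 2$ (so that piece of $\rho^2$ is $\le s_1^2$) together with the identity $\lambda(v)-\lambda_1=\lambda(v)\sum_{k\ge 2}\beta_{k,i}^2(1-\lambda_1/\lambda_k)\le\lambda(v)s_1^2$ from the Rayleigh-quotient expansion, which controls the $k=1$ term $c_1^2(\lambda(v)/\lambda_1-1)^2\le(\lambda(v)/\lambda_1)^2 s_1^4$; combining with $s_1\le1$ and $\lambda(v)\ge\lambda_1$ yields $\rho(v)^2 \le \bigl((\lambda(v)/\lambda_1)^2+1\bigr)s_1^2 \le (3\lambda(v)/\lambda_1)^2 s_1^2$. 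Your draft gestures at these steps but never pins them down, so as it stands the argument is incomplete rather than wrong in principle. (You are right that $\mathcal L_1$ in the display is a typo for $\mathcal E_1$, and likewise $\lambda(x)$ should read $\lambda(v)$.)
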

Therefore convergence of the Rayleigh quotient towards $\lambda_1$ assures convergence 
of the angle between the iterands and the corresponding eigenspace $\mathcal E_1$. 
Concerning the perturbation, its magnitude may be chosen proportional to the current 
error in the subspaces. 

The above statements will be proven by reducing the problem to a more simpler model
case.

\subsection{A model case analysis}\label{sec:model}

The aim of this section is to prove the convergence rate of a preconditioned
inverse iteration for a special eigenvalue problem given by a bounded
operator on a Hilbert space. Later on the setting of the previous part
can be transformed to fulfill the specialized assumptions.

Suppose that a Hilbert space $X$ with scalar product $(\cdot,\cdot)_X$
is given. Furthermore let $B:X\rightarrow X$ be a bounded operator
such that 
\begin{equation*}
  \inf_{x \in X \setminus \{0\}} \frac{(Bx,x)_X}{(x,x)_X} = 0. 
\end{equation*}
where the top of the spectrum consists of discrete eigenvalues 
$\mu_N < \ldots < \mu_1$ with corresponding finite dimensional eigenspace $\mathcal X_k$.  
We are looking for the biggest eigenvalues $\mu_i$ of the eigenvalue
problem
\begin{equation*}
  B x = \mu x.
\end{equation*}

Furthermore suppose we are given an preconditioner $T: X \rightarrow X$
such that 
\begin{equation*}
  \| \mathrm{Id} - T \|_X \le \gamma_{T}.
\end{equation*}
Then we define the following iteration:
\begin{equation}\label{modelit}
  x' = x + \frac{1}{\mu(x)} T( B x - \mu(x) x) + \eta, \quad
  \mu(x) = \frac{(Bx,x)_X}{(x,x)_X}, 
\end{equation}
where $\eta \in X$ is again a perturbation. The eigenvalue residual 
is defined by
\begin{equation*}
  \rho(x) = \| \frac{1}{\mu(x)}( B x - \mu(x) x) \|_X/\|x\|_X.
\end{equation*}

Regarding the convergence of the sequence generated by \ref{modelit} we can state the following
estimate.
\begin{theorem} \label{PPINVITconv}
  Let $x \in X$, $x\not=0$, such that for the associated Rayleigh quotient $\mu = \mu(x)$ 
  fulfills $\mu_{k+1} < \mu \le \mu_{k}$. Furthermore assume that the perturbation $\eta$ 
  is bounded by 
  \begin{equation*}
    \| \eta \|_X/\|x\|_X \le \gamma_{\eta} \, \rho(x), \quad \textnormal{where} \quad
    \gamma = \gamma_P + \gamma_{\eta} < 1.
  \end{equation*}
  The above iteration step applied to a vector $x$ then gives an output $x'$ and an associated 
  Rayleigh quotient $\mu'=\mu(x')$, for which either $\mu' > \mu_k$ or 
  $\mu_{k+1} \le \mu' < \mu_{k}$. In the latter case, 
 \begin{equation*}
    \frac{\mu_k - \mu'}{\mu' - \mu_{k+1}} \le \sigma^2 \frac{\mu_k - \mu}{\mu - \mu_{k+1}},
    \quad \sigma=1-(1-\gamma)\frac{\mu_k - \mu_{k+1}}{\mu_k}.
  \end{equation*}
  \label{thm:pinvit_inverse}
\end{theorem}
\begin{proof}
Neglecting the perturbation the proof of this theorem can be taken almost verbatim from the proof of
Theorem 1.1 of \cite{kn08} for the matrix case, where $\mu_{\min}=0$. The notation has been chosen
identical to the one from  \cite{kn08} in order to simplify this transition. In the presence of 
perturbations, we see that the scaled iterand $\mu(x) x'$ fulfills
\begin{equation*}
  \mu(x) x' = Bx - (I-T)(Bx - \mu(x) x) + \mu(x) \eta.
\end{equation*}
Hence $\mu(x) x'$ lies in a ball with radius $\gamma \| B x - \mu(x) x\|_X$ and center $Bx$ since
the distance between $\mu(x) x'$ and $B x$ can be estimated by
\begin{align*}
  \| (I-T) (B x - \mu(x) x)  + \eta \|_X & \le \gamma_T \| B x - \mu(x) x \|_X  
    +  \gamma_{\eta} \|B x - \mu(x) x \|_X \\ 
  & = \gamma \| B x - \mu(x) x \|_X.
\end{align*}
From then on the proof proceeds as before.   
\end{proof}

The error estimation of Theorem \ref{thm:equivalenceresiduum} also has a 
counterpart in this setting.
\begin{theorem}
  Let $x \in X$, $x\not=0$, such that for the associated Rayleigh quotient $\mu = \mu(x)$ fulfills
  $\mu_2 < \mu \le \mu_1$. Denote the angle in terms of the inner product on $X$ between $x$ and 
  the eigenspace $\mathcal X_1$ by $\phi_X(x,\mathcal X_1)$. Then 
  \begin{equation*}
    \sin \phi_X(x, \mathcal X_1) \le \sqrt{\frac{\mu_1 - \mu(x)}{\mu(x) - \mu_2}}.
  \end{equation*}
  Moreover the eigenvector residual controls the angle, i.e.
  \begin{equation*}
    \frac{\mu(x)}{3 \mu_1} \rho(x) \le \sin \phi_X(x, \mathcal X_1) \le 
    \frac{\mu(x)}{\mu(x)-\mu_2} \rho(x).
  \end{equation*}
\end{theorem}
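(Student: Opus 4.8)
The plan is to work entirely in the Hilbert space $X$ with its inner product, using the spectral decomposition of the self-adjoint bounded operator $B$. Write $x = \sum_i c_i$ with $c_i$ the orthogonal projection of $x$ onto the eigenspace $\mathcal X_i$ (plus a component $c_0$ on the part of the spectrum accumulating at $0$), so that $\mu(x) = \sum_i \mu_i |c_i|^2 / |x|^2$ with $\mu_0 := 0$ on the remainder. The angle $\phi_X(x,\mathcal X_1)$ satisfies $\sin^2\phi_X(x,\mathcal X_1) = (|x|^2 - |c_1|^2)/|x|^2 = \sum_{i\neq 1}|c_i|^2/|x|^2$.

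First I would prove the upper bound $\sin^2\phi_X \le (\mu_1 - \mu(x))/(\mu(x)-\mu_2)$. The key inequality is that for every component $i\neq 1$ one has $\mu_1 - \mu_i \ge \mu_1 - \mu_2 > 0$, so $\mu_1 - \mu(x) = \sum_{i\neq 1}(\mu_1-\mu_i)|c_i|^2/|x|^2 \ge (\mu_1-\mu_2)\sin^2\phi_X$; conversely $\mu(x) - \mu_2 = \sum_i (\mu_i - \mu_2)|c_i|^2/|x|^2 \le (\mu_1-\mu_2)|c_1|^2/|x|^2 + \dots$, and bounding this against $(\mu_1-\mu_2)$ times something $\le 1$ — actually the cleaner route is to observe $\mu(x)-\mu_2 \le \mu_1 - \mu_2$ times $\cos^2\phi_X$ is false in general, so instead I divide the two identities: rearranging $\mu_1 - \mu(x) \ge (\mu_1-\mu_2)\sin^2\phi_X$ and $\mu(x) - \mu_2 \le (\mu_1-\mu_2)\cos^2\phi_X + (\mu_1 - \mu_2)\cdot 0$ — the point is that $\sum_{i\ne 1}(\mu_i-\mu_2)|c_i|^2 \le 0$ since $\mu_i \le \mu_2$ for $i\ge 2$ and $\mu_0 = 0 < \mu_2$... this needs $\mu_2 > 0$, which holds. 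So $\mu(x)-\mu_2 \le (\mu_1-\mu_2)|c_1|^2/|x|^2 = (\mu_1-\mu_2)\cos^2\phi_X$, and dividing gives $\sin^2\phi_X/\cos^2\phi_X \le (\mu_1-\mu(x))/(\mu(x)-\mu_2)$; since $\cos^2\phi_X \le 1$ the stated bound follows, after noting $\sin^2 \le \sin^2/\cos^2$.

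Next I would handle the two-sided residual bound. Expand $\rho(x)^2 = \|\frac{1}{\mu(x)}(Bx - \mu(x)x)\|_X^2/\|x\|_X^2 = \frac{1}{\mu(x)^2}\sum_i (\mu_i - \mu(x))^2 |c_i|^2/|x|^2$. For the upper bound on $\rho$, I would bound each factor $(\mu(x) - \mu_i)^2 \le$ (something)$\cdot(\mu_1 - \mu_i)$-type estimate, or more directly relate $\sum_i(\mu_i-\mu(x))^2|c_i|^2$ to $(\mu_1 - \mu(x))\sum_{i\ne1}(\mu_1-\mu_i)|c_i|^2$ using $|\mu_i - \mu(x)| \le \mu_1 - \mu_2 \le$ constant, combined with the first part. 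For the lower bound $\frac{\mu(x)}{3\mu_1}\rho(x) \le \sin\phi_X$, the idea is that $\mu(x)\rho(x) = \|Bx-\mu(x)x\|_X$ and $\|Bx - \mu(x)x\|_X^2 = \sum_i(\mu_i-\mu(x))^2|c_i|^2$; isolating the $i=1$ term gives $(\mu_1-\mu(x))^2|c_1|^2 \le \mu_1^2 \sin^2\phi_X |x|^2$ roughly, while the remaining terms are controlled by $\sum_{i\ne1}\mu_i^2|c_i|^2 \le \mu_1^2\sin^2\phi_X|x|^2$ plus cross terms in $\mu(x)$; collecting everything and using $\mu(x)\le\mu_1$, $\mu_1 - \mu(x) \le \mu_1$, one arrives at $\|Bx-\mu(x)x\|_X \le 3\mu_1\sin\phi_X|x|_X$, hence the claim. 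The constant $3$ comes from splitting $(\mu_i-\mu(x))^2 \le 2\mu_i^2 + 2\mu(x)^2$ and similar crude bounds, then absorbing into a single factor.

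The main obstacle will be the bookkeeping in the residual estimates: keeping track of which terms carry the $i=1$ component versus the orthogonal complement, and choosing the crude-but-sufficient bounds (triangle inequalities on $\mu_i - \mu(x)$, monotonicity $\mu_2 \ge \mu_i$ for $i\ge 2$, positivity $\mu(x) > \mu_2 > 0$) so that the final constants are exactly $3$ and $\lambda_2/(\lambda_2-\mu(x))$ rather than something larger. Everything else is a direct computation with the spectral decomposition; there is no analytic subtlety since $B$ is bounded and self-adjoint and the relevant eigenspaces are finite-dimensional, so Parseval applies term by term.
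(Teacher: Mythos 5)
Your argument for the first inequality is correct and follows the paper's approach almost exactly: both you and the paper derive $\tan^2\phi_X \le (\mu_1-\mu(x))/(\mu(x)-\mu_2)$ from the orthogonal decomposition and then use $\sin\phi_X \le \tan\phi_X$.

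The two-sided residual estimate, however, has genuine gaps as outlined. For the upper bound $\sin\phi_X \le \frac{\mu(x)}{\mu(x)-\mu_2}\rho(x)$, the paper's key step is the Temple--Kato inequality
\begin{equation*}
(\mu_1 - \mu(x))(\mu(x) - \mu_2) \le \|Bx - \mu(x)x\|_X^2/\|x\|_X^2,
\end{equation*}
obtained from $\|Bx - \bar\mu x\|_X^2 = \int_{\sigma(B)}(\mu - \bar\mu)^2\,\mathrm d(E_\mu x, x)$ together with $(\mu - \mu_1)(\mu - \mu_2)\ge 0$ on $\sigma(B)$ and $\int(\mu - \bar\mu)\,\mathrm d(E_\mu x, x) = 0$; combined with the first part this gives the bound. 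Your proposed estimate $(\mu(x)-\mu_i)^2 \lesssim (\mu_1-\mu_i)$ cannot hold for $i = 1$ (the right-hand side vanishes while the left does not), so that route does not close.

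For the lower bound $\frac{\mu(x)}{3\mu_1}\rho(x)\le\sin\phi_X$, the crude splitting $(\mu_i-\mu(x))^2 \le 2\mu_i^2 + 2\mu(x)^2$ fails at $i = 1$: it gives $(\mu_1-\mu(x))^2|c_1|^2 \le 2(\mu_1^2+\mu(x)^2)|c_1|^2 \approx 4\mu_1^2\cos^2\phi_X\|x\|^2$, which is \emph{not} $O(\sin^2\phi_X)$ and so does not produce the needed $\|Bx-\mu(x)x\|_X \lesssim \mu_1\sin\phi_X\|x\|_X$. The cancellation $\mu_1-\mu(x)$ must be exploited; you need the separate estimate
\begin{equation*}
\mu_1 - \mu(x) \le c\,\mu_1 \sin^2\phi_X,
\end{equation*}
which follows directly from $(\mu_1-\mu(x))\|x\|_X^2 = \sum_{i\ne 1}(\mu_1-\mu_i)|c_i|^2 \le \mu_1\sin^2\phi_X\|x\|_X^2$ (so even $c=1$ works; the paper quotes $c = 2$ from elsewhere). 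The paper then uses the triangle-inequality split
\begin{equation*}
\|Bx-\mu(x)x\|_X \le \|Bx-\mu_1 x\|_X + (\mu_1-\mu(x))\|x\|_X,
\end{equation*}
bounds the first term by $\mu_1\sin\phi_X\|x\|_X$ via spectral calculus restricted to $\sigma(B)\setminus\{\mu_1\}$, and the second by $2\mu_1\sin^2\phi_X\|x\|_X$, giving the factor $3$ upon dividing by $\mu(x)\|x\|_X$. Your ``roughly'' claim $(\mu_1-\mu(x))^2|c_1|^2 \le \mu_1^2\sin^2\phi_X\|x\|_X^2$ is of the right shape but is not a consequence of the splitting you propose; you need the $\mu_1-\mu(x)\lesssim\sin^2\phi_X$ inequality to make it go through.
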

\begin{proof}
  Decompose $x$ orthogonally into $x = x_{\|} + x_{\perp}$, where $x_{\|}\in \mathcal X_1$ and
  $x_{\perp}\in \mathcal X^{\perp}_1$. By the definition of the Rayleigh quotient
  and by orthogonality
  \begin{align*}
    \mu(x) ( \|x_{\|}\|_X^2 + \|x_{\perp}\|_X^2 ) & = \mu(x) \|x\|_X^2= \|x\|_B^2 
    = \|x_{\|}\|_B^2 + \|x_{\perp}\|_B^2 \\
    & \le \mu_1 \|x_{\|}\|_X^2 + \mu_2 \|x_{\perp}\|_X^2.
  \end{align*}
  Then one can estimate the angle by
  \begin{equation*}
    \sin \phi_X = \frac{\|x_{\perp}\|_X}{\|x\|_X}\le \frac{\|x_{\perp}\|_X}{\|x_{\|}\|_X}
    \le \sqrt{\frac{\mu_1 - \mu(x)}{\mu(x) - \mu_2}}.
  \end{equation*}

  For the upper bound of the second inequality we use the previous estimate and combine it
  with the Temple-Kato inequality
  \begin{equation*}
    (\mu_1 - \mu(x))(\mu(x)-\mu_2) \le \|B x - \mu(x) x\|_X^2 / \|x\|_X^2,
  \end{equation*}
  which will directly give the desired result.
  In order to prove the Temple-Kato inequality let $\bar{\mu} = \mu(x)$ for convenience. Then by the
  spectral calculus it follows that
  \begin{equation*}
   \| B x - \bar \mu x \|_X^2 =  
    \int_{\sigma(B)} (\mu- \bar \mu)^2 \mathrm d(E_{\mu} x,x).
  \end{equation*}
  Then the inequality follows directly from
  \begin{equation*}
    (\mu - \mu_{k+1}) (\mu - \mu_k) \ge 0, \quad \mu \in \sigma(B).
  \end{equation*}


  For the lower estimate we see that
  \begin{align*}
    \rho(x) \le \frac{1}{\mu} \| B x - \mu_1 x\|_X / \|x\|_X   + \frac{\mu_1 - \mu(x)}{\mu(x)}
  \end{align*}
  We estimate the terms separately:
  \begin{align*}
    \| B x - \mu_1 x \|_X^2 &= \int_{\sigma(B)} (\mu - \mu_1)^2 \; \mathrm d(E_{\mu}x,x)_X \\
    &=\int_{\sigma(B)\setminus \{\mu_1\}} (\mu - \mu_1)^2 \; \mathrm d(E_{\mu}x,x)_X 
    \le \mu_1^2 \|x^{\perp}\|_X^2.
  \end{align*}
  The other term gives for $x_1 = x_{\|}/\|x_{\|}\|_X$ and $\|x\|_X=1$
  \begin{align*}
    \mu(x) - \mu_1 & = (B x,x) - (B x_1, x_1) \\
    & = ( B(x-x_1),(x-x_1) ) - \mu(x) ( x-x_1, x-x_1) \\
    & \le \mu_1 \| x-x_1 \|^2.
  \end{align*}
  It then follows by geometric reasoning (cf. Lemma 2 in \cite{drsz08})  that 
  \begin{equation*}
    \mu_1 - \mu(x)  \le 2 \mu_1 \| x^{\perp}\|_X^2/\|x\|_X^2.
  \end{equation*}
  Putting everything together gives
  \begin{equation*}
    \rho(x) \le \frac{\mu_1}{\mu(x)} \sin\phi_X + 
    \frac{2 \mu_1 \sin \phi_X}{\mu(x)} \sin \phi_X \le \frac{3 \mu_1}{\mu(x)} \sin \phi_X, 
  \end{equation*}
  and hence the assertion.
\end{proof}

\subsection{Reduction to the model case}

The aim of this part is to show how to transform the general setting of Section 
\ref{sec:operator_formulation} to the case analyzed in the previous section \ref{sec:model}. 

Based on the transformation $A u = \lambda E u$ to $A^{-1}E u = \lambda^{-1} u$
we set
\begin{equation*}
  B = A^{-1}E: V \rightarrow V, \quad \mu=\lambda^{-1}.
\end{equation*}
On the space $V$ we introduce the inner product $(v,w)_V = \langle A v, w \rangle$
and the induced norm $\|v\|_V = \sqrt{(v,v)_V} = \|v\|_A$. 
Then the upper part of the spectrum of $B$ consists of discrete 
eigenvalues $\mu_k = \lambda_k^{-1}$ with finite dimensional eigenvalues.
Since the spectrum of $A$ is by assumption unbounded it follows that
the infimum of the spectrum is zero. The Rayleigh quotient is then
defined as 
\begin{equation*}
  \label{eq:rayleigh_inverse}
  \mu(v) = \frac{(B v,v)_V}{(v,v)_V} =
  \frac{ \langle E v,v\rangle}{\langle A v,v\rangle}
\end{equation*}
As the preconditioner we set
\begin{equation*}
  T = P^{-1}A: V \rightarrow V.
\end{equation*}
For the quality of the preconditioner it follows due to symmetry with
respect to $(\cdot, \cdot)_V$ that
\begin{equation*}
  \| \mathrm{Id} - T\|_V = \sup_{v \in V \setminus \{0\}} ( (\mathrm{Id} - P^{-1} A) v,v)_V = 
  \|\mathrm{Id} - P^{-1} A\|_A \le \gamma_P.
\end{equation*}

Our aim is now to show that the iterands and the Rayleigh quotients
of the iterations defined by PPINVIT (Definition \ref{def:ppinvit}) and 
by the model iteration defined by Equation \eqref{modelit} coincide, 
provided that $\eta = \xi$.
For the Rayleigh quotient, it follows
directly from equation \eqref{eq:rayleigh_inverse} that $\mu(v) = \lambda(v)^{-1}$.
For the next iterand 
\begin{align*}
  v' &= v + \frac{1}{\mu(v)} T (B v - \mu(v) v) + \eta \\
  &= v + \lambda(v) P^{-1} A ( A^{-1}E v - \lambda(v)^{-1} v) + \eta\\
  &= v - P^{-1}( A v - \lambda(v) E v) + \eta,
\end{align*}
which is the iteration of Theorem \ref{thm:rayleigh_reduction}
if we set $\eta = \xi$.

For the estimation of the convergence of the Rayleigh quotients, 
the factor can be transformed to
\begin{equation*}
  \frac{\lambda_{k+1}-\lambda_k}{\lambda_{k+1}} = 
  \frac{1/\mu_{k+1} - 1/\mu_k}{1/\mu_{k+1}} = \frac{\mu_k - \mu_{k+1}}{\mu_k}.
\end{equation*}
For the fractions of the Rayleigh quotients
\begin{equation*}
  \frac{\lambda(v) - \lambda_k}{\lambda_{k+1}-\lambda(v)} = 
  \frac{ 1/\mu - 1/\mu_k}{1/\mu_{k+1} - 1/\mu} = 
  \frac{\mu_k - \mu}{\mu-\mu_{k+1}} \cdot \frac{\mu_{k+1}}{\mu_k}.
\end{equation*}
and analogously also for $\lambda'$. As the factor $\mu_{k+1}/\mu_k$ 
appears on both sides the inequality of Theorem \ref{thm:rayleigh_reduction}
follows from Theorem \ref{thm:pinvit_inverse}.

By direct calculation it follows that
\begin{equation*}
  \| \frac{1}{\mu(v)} (B v - \mu(v) v) \|_V = \|A v -\lambda(v) E v \|_{A^{-1}}.
\end{equation*}
and hence the perturbations are bounded by the same term. Moreover the
upper and lower bounds of Theorem \ref{thm:equivalenceresiduum} follow
immediately.

Note that this sort of transformation has also been used in \cite{kn03} to get
rid of the mass matrix.

\section{Inexact operator applications}
\label{sec:operator_ppinvit}

Considering a numerical realization of the PPINVIT, the interpretation of the perturbation $\xi$ is rather natural: In the calculation of the residual $r(v) = Av - \lambda(v)Ev$, the corresponding coefficient vector generally has infinitely many entries, 
so that the perturbation will be related to the error of the finite dimensional approximations of the actual residuals.
In this section, we will show that the convergence of PPINVIT is retained as long as the approximate applications of the operators $A$ and $E$ are kept proportional to the current subspace error, which may also be measured by the residual through Theorem \ref{thm:equivalenceresiduum}. We will start in Section \ref{sec:inexapplop} by showing that inexact operator applications proportional to some $\varepsilon > 0$ result in an approximate residual $r_{\varepsilon}(v)$ which approximates  $r(v)$ up to a constant times $\varepsilon$. The main difficulty here stems from the nonlinearity of the Rayleigh quotient and from the fact that in contrast to the finite dimensional case, we will have to deal with different norms for $H$ and $V$. In the next section, we then devise an algorithm which determines an appropriate accuracy for the residual to preserve convergence, while the operator applications involved only have to be carried out proportional to the current subspace error.

To start with, let us introduce the approximate operators and a measure for their quality.

\begin{definition}
  \label{def:approx_op}
  For all $v \in V$ and $\varepsilon >0$ let $A_{\varepsilon}(v)$ and $E_{\varepsilon}(v)$ be approximation of $A v$ and $E v$ respectively, such that
  \begin{eqnarray*}
    \|A_{\varepsilon} (v) - A v \|_{*} \le \varepsilon \|v\|, \quad
    |E_{\varepsilon} (v) - E v |_{*} \le \varepsilon | v |.
  \end{eqnarray*}
  Furthermore define the \emph{perturbed Rayleigh quotient} as
  \begin{eqnarray*}
    \mu_{\varepsilon}(v) = \frac{\langle A_{\varepsilon}(v), v \rangle}{\langle E_{\varepsilon}(v), v \rangle}.
  \end{eqnarray*}
\end{definition}
In the following we will assume that  $P^{-1}$ can be applied exactly, which causes no further restrictions: If only an approximate operator $\tilde P$ of $P$ is available which is still spectrally equivalent to $A$, one simply uses $\tilde P$ instead of $P$ in all calculations. In the analysis, the constant $\gamma_P$ of equation (\ref{eq:precond}) is to be replaced by the corresponding perturbed one. In addition, the iteration may also be generalized by using a different preconditioner $P_n$ in  each step. The convergence results can be extended to such cases if the family of preconditioners $(P_n)_{n\ge 0}$ is uniformly spectrally equivalent to $A$, i.e. 
\begin{eqnarray*}
  \| I - P_{n}^{-1} A \|_{A} \le \gamma_P
  \quad \textnormal{for all}  \quad n \ge 0.
\end{eqnarray*}
In the prototype example of wavelets considered in Section \ref{sec:wavelet} 
the discretization of $P$ will lead to a diagonal matrix which can be applied exactly.

With Definition \ref{def:approx_op}, the following recursion allows a finite dimensional implementation of the PPINVIT algorithm.
\begin{definition}
  \label{def:ppinvit2}
  Let a vector $v$, $v\not=0$, and a tolerance $\varepsilon>0$ be given. Define the approximate Rayleigh quotient  $\mu_{\varepsilon} = \mu_{\varepsilon}(v)$ associated to $v$. We let  
  \begin{eqnarray*}
    \tilde v' &=& v'_{\varepsilon} - P^{-1} ( A_{\varepsilon}(v_{\varepsilon}) - \mu_{\varepsilon} E_{\varepsilon}( v_{\varepsilon}) ), \\
    v'_{\varepsilon} &=& | \tilde v'_{\varepsilon} | ^{-1} \tilde v'_{\varepsilon}, \\
    \mu'_{\varepsilon} &=& \mu_{\varepsilon}(v'_{\varepsilon}).
  \end{eqnarray*} 
\end{definition}
Comparing this recursion with the original Definition \ref{def:approx_op} gives $\xi = B^{-1} (r_{\varepsilon}(v)-r(v))$, so we will have to bound the $A$-norm of this expression in terms of the residual $\rho(v)$, cf. Theorem \ref{thm:rayleigh_reduction}.

%

In the course of our analysis, it will be important to keep track of the induced error with respect to the given tolerance $\varepsilon$. This is done via constants $c_0,c_1,c_2,c_3$ that will be specified  in the proofs. As we are only interested in a qualitative statement, it suffices to know that these constants can be bounded independent of $\varepsilon$ and the current vector $v$, as long as certain requirements are fulfilled. General bounds for $c_0,c_1,c_2,c_3$ will involve constants such as $\alpha, \lambda_1, \lambda_2, \delta_0, \sigma_0, \sigma_1$ etc.   

\subsection{Inexact application of operators}\label{sec:inexapplop}

First we will investigate the difference between the approximate and the exact Rayleigh quotient.
\begin{lemma}
  \label{lemma:approx_rayleigh}
  Let $c_0 = \min\{1/2, \sigma_0\}$, $0<\varepsilon \le c_0$, $v \in V$, $v \not=0$, and $\lambda_1 \le \mu(v) < \lambda_2$. Then the approximate Rayleigh-quotient is bounded by
  \begin{eqnarray*}
    | \mu_{\varepsilon}(v) -  \mu(v) | \le  c_1 \varepsilon,
  \end{eqnarray*}
  where $c_1$ can bounded independently of $v$ and $\varepsilon$.
\end{lemma}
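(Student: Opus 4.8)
\textbf{Proof plan for Lemma \ref{lemma:approx_rayleigh}.}

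The plan is to estimate numerator and denominator of $\mu_\varepsilon(v)$ separately against those of $\mu(v)$, and then combine the two via an elementary bound on the difference of two fractions. Fix $v \neq 0$; by homogeneity of both Rayleigh quotients we may normalize $|v| = 1$, so that $\langle Ev, v\rangle = (v,v) = 1$. Then $\mu(v) = \langle Av, v\rangle \in [\lambda_1, \lambda_2)$, which in particular bounds $\|v\|$ from above and below via \eqref{eq:A_constants}: $\sigma_0 \|v\|^2 \le \mu(v) < \lambda_2$, so $\|v\|^2 \le \lambda_2/\sigma_0 =: R^2$, and also $\|v\|^2 \ge \lambda_1/\sigma_1$.

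First I would bound the denominator. Using the defining estimate $|E_\varepsilon(v) - Ev|_* \le \varepsilon |v| = \varepsilon$ together with $|\langle E_\varepsilon(v) - Ev, v\rangle| \le |E_\varepsilon(v) - Ev|_* \, |v| \le \varepsilon$, we get $|\langle E_\varepsilon(v), v\rangle - 1| \le \varepsilon$. Since $\varepsilon \le c_0 \le 1/2$, this yields $\langle E_\varepsilon(v), v\rangle \ge 1 - \varepsilon \ge 1/2 > 0$, so $\mu_\varepsilon(v)$ is well-defined and its denominator is bounded away from zero uniformly. Next the numerator: from $\|A_\varepsilon(v) - Av\|_* \le \varepsilon \|v\| \le \varepsilon R$ and $|\langle A_\varepsilon(v) - Av, v\rangle| \le \|A_\varepsilon(v) - Av\|_* \|v\| \le \varepsilon R^2$, so $|\langle A_\varepsilon(v), v\rangle - \mu(v)| \le \varepsilon R^2$. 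Then write
\begin{equation*}
  \mu_\varepsilon(v) - \mu(v) = \frac{\langle A_\varepsilon(v), v\rangle - \mu(v)\langle E_\varepsilon(v), v\rangle}{\langle E_\varepsilon(v), v\rangle},
\end{equation*}
and estimate the numerator of this expression by $|\langle A_\varepsilon(v), v\rangle - \mu(v)| + \mu(v)\,|\langle E_\varepsilon(v), v\rangle - 1| \le \varepsilon R^2 + \lambda_2 \varepsilon$. Dividing by $\langle E_\varepsilon(v), v\rangle \ge 1/2$ gives $|\mu_\varepsilon(v) - \mu(v)| \le 2(R^2 + \lambda_2)\varepsilon$, so one may take $c_1 = 2(\lambda_2/\sigma_0 + \lambda_2)$, which depends only on the structural constants $\lambda_2, \sigma_0$ and not on $v$ or $\varepsilon$.

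There is no real obstacle here; the only point requiring care is ensuring all implicit constants are genuinely uniform in $v$. This is exactly why the hypothesis $\lambda_1 \le \mu(v) < \lambda_2$ is imposed: without an a priori upper bound on $\mu(v)$ (equivalently, on $\|v\|$ after normalization), the factor $R^2$ in the numerator estimate could not be controlled, and the bound would degrade for vectors with large Rayleigh quotient. The condition $\varepsilon \le c_0 \le 1/2$ is what keeps the perturbed denominator uniformly bounded below; one could alternatively absorb a factor like $1/(1-\varepsilon)$ but pinning it at $2$ keeps $c_1$ clean. I would present the normalization reduction explicitly at the start, since it is what makes the two-sided fraction estimate transparent.
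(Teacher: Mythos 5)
Your proposal is correct and essentially mirrors the paper's argument: both perturb the numerator and denominator of the Rayleigh quotient separately using Definition~\ref{def:approx_op} together with $\sigma_0\|v\|^2 \le \langle Av,v\rangle$ from \eqref{eq:A_constants}, control the perturbed denominator from below via $\varepsilon \le 1/2$, and arrive at the identical final constant $c_1 \le 2(1+\sigma_0^{-1})\lambda_2$. The only cosmetic difference is that you normalize $|v|=1$ up front while the paper keeps $\|v\|$ and phrases the intermediate step as a two-sided multiplicative bound $\frac{1-\varepsilon/\sigma_0}{1+\varepsilon}\mu(v) \le \mu_\varepsilon(v) \le \frac{1+\varepsilon/\sigma_0}{1-\varepsilon}\mu(v)$ before subtracting $\mu(v)$; the content is the same.
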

\begin{proof}
  Writing out the approximate Rayleigh quotient gives
  \begin{eqnarray}
    \label{eq:approx_rayleigh_estimate}
    \mu_{\varepsilon}(v)   
    = \frac{\langle A v, v \rangle + \langle A_{\varepsilon}(v)-A v, v \rangle}
    {\langle E v, v \rangle + \langle E_{\varepsilon} (v) -E v, v \rangle}.
  \end{eqnarray}
  From the definition of the approximate operators $A_{\varepsilon}$ and $E_{\varepsilon}$ it follows that
  \begin{eqnarray*}
    | \langle A_{\varepsilon}(v)-A v , v \rangle | \le \varepsilon \|v\|^2
    \le \frac{\varepsilon}{\sigma_0} \|v\|_A^2 , \quad
    | \langle E_{\varepsilon}(v)-E v , v \rangle | \le \varepsilon |v|^2,
  \end{eqnarray*}
  where we used the norm equivalence of $A$ and $\|\cdot\|$, equation (\ref{eq:A_constants}). Inserting these estimates in equation (\ref{eq:approx_rayleigh_estimate}) gives
  \begin{eqnarray*}
    \frac{1-\frac{\varepsilon}{\sigma_0}}{1+\varepsilon} \mu(v) \le 
    \mu_{\varepsilon}(v) \le \frac{1+\frac{\varepsilon}{\sigma_0}}{1-\varepsilon} \mu(v),
  \end{eqnarray*}
  by noting that $\varepsilon \le c_0$. To estimate the difference in the Rayleigh quotients, we subtract $\mu(v)$ to obtain
  \begin{eqnarray*}
    - \frac{1 + \sigma_0^{-1}}{1 + \varepsilon} \mu(v) \varepsilon \le
    \mu_{\varepsilon}(v) - \mu(v) \le
    \frac{1+\sigma_0^{-1}}{1-\varepsilon} \mu(v) \varepsilon.
  \end{eqnarray*}
  Now setting
  \begin{eqnarray*}
    c_1 = \max\left\{\frac{1 + \sigma_0^{-1}}{1 + \varepsilon} \mu(v), 
    \frac{1+\sigma_0^{-1}}{1-\varepsilon} \mu(v)\right\}  = \frac{1+\sigma_0^{-1}}{1-\varepsilon} \mu(v)
  \end{eqnarray*}
  gives $|\mu_{\varepsilon}(v) - \mu(v) | \le c_1 \varepsilon$. Furthermore $c_1$ can be bounded from above by
  \begin{eqnarray*}
    c_1 \le 2(1+\sigma_0^{-1}) \lambda_2
  \end{eqnarray*}
  since $\mu(v) \le \lambda_2$ and $\varepsilon \le 1/2$.
\end{proof}

Next we will estimate the difference between the approximated and exact residual with respect to a norm which will come in handy later on.
\begin{lemma}
  \label{lemma:residual_approx}
  Let $0<\varepsilon \le c_0$, $v \in V$, $v \not=0$ and $\lambda_1 \le \mu(v) < \lambda_2$. 
Then there exists a constant $c_2$ such that for 
  \begin{eqnarray*}
    r(v) = A v - \mu(v) E v, \quad 
    r_{\varepsilon}(v) := A_{\varepsilon}(v) - \mu_{\varepsilon}(v) E_{\varepsilon}(v),
  \end{eqnarray*}
 the difference of the exact and the approximate residual can be bounded by
  \begin{eqnarray*}
    \|  r_{\varepsilon}(v) - r(v) \|_{A^{-1}}/ \|v\|_A \le c_2 \varepsilon.
  \end{eqnarray*}
  Furthermore $c_2$ can be bounded independently of $v$ and $\varepsilon$.
\end{lemma}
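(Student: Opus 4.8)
The goal is to bound $\|r_\varepsilon(v) - r(v)\|_{A^{-1}}/\|v\|_A$ by a constant multiple of $\varepsilon$. I would start by writing out the difference explicitly:
\begin{eqnarray*}
  r_\varepsilon(v) - r(v) = \bigl(A_\varepsilon(v) - Av\bigr) - \mu_\varepsilon(v)\bigl(E_\varepsilon(v) - Ev\bigr) - \bigl(\mu_\varepsilon(v) - \mu(v)\bigr)Ev,
\end{eqnarray*}
so the task splits into three terms. The strategy is to estimate each in the $\|\cdot\|_{A^{-1}}$ norm and collect the constants.

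**Key steps.** First, for the $A$-term I use $\|A_\varepsilon(v) - Av\|_* \le \varepsilon\|v\|$ from Definition \ref{def:approx_op}, together with the fact that $\|\cdot\|_{A^{-1}}$ and $\|\cdot\|_*$ are equivalent on $V^*$ (this follows by duality from (\ref{eq:A_constants}): $\|f\|_{A^{-1}}^2 = \langle A^{-1}f, f\rangle \le \sigma_0^{-1}\|f\|_*^2$), and then $\|v\| \le \sigma_0^{-1/2}\|v\|_A$. Second, for the $E$-term I need to pass from $|E_\varepsilon(v) - Ev|_*$ (the $H^*$ norm) to the $A^{-1}$ norm; here I would use that $E$ maps into $H^* \subset V^*$ with the embedding $\|\cdot\|_* \le \alpha|\cdot|_*$ dual to (\ref{eq:norm_constant}), giving $\|E_\varepsilon(v) - Ev\|_{A^{-1}} \le \sigma_0^{-1/2}\alpha\,|E_\varepsilon(v)-Ev|_* \le \sigma_0^{-1/2}\alpha\varepsilon|v| \le \sigma_0^{-1/2}\alpha^2\varepsilon\|v\|$, and I bound the factor $\mu_\varepsilon(v)$ using Lemma \ref{lemma:approx_rayleigh} plus $\mu(v) < \lambda_2$ so that $\mu_\varepsilon(v) \le \lambda_2 + c_1 c_0 \le 2\lambda_2 + c_1/2$ or similar. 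Third, for the $\mu$-difference term I invoke Lemma \ref{lemma:approx_rayleigh} to get $|\mu_\varepsilon(v) - \mu(v)| \le c_1\varepsilon$ and bound $\|Ev\|_{A^{-1}}$ in terms of $\|v\|_A$ via $\|Ev\|_{A^{-1}}^2 = \langle A^{-1}Ev, Ev\rangle$; since $Bv = A^{-1}Ev$ has Rayleigh quotient (in the $A$-inner product) equal to $1/\mu(v) \le 1/\lambda_1$, one gets $\|Ev\|_{A^{-1}}^2 = (Bv,v)_V \le \lambda_1^{-1}\|v\|_V^2 = \lambda_1^{-1}\|v\|_A^2$. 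Finally I divide through by $\|v\|_A$, collect the three contributions, and set $c_2$ equal to their sum; by construction each piece was bounded by an $\varepsilon$-free constant times $\varepsilon\|v\|_A$, so $c_2$ is independent of $v$ and $\varepsilon$, expressible through $\alpha, \sigma_0, \lambda_1, \lambda_2, c_1$.

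**Main obstacle.** The only genuinely delicate point is the careful bookkeeping of the three different norms — $\|\cdot\|_*$ on $V^*$, $|\cdot|_*$ on $H^*$, and $\|\cdot\|_{A^{-1}}$ — and the duality estimates relating them, since (as the text itself flags) unlike the finite-dimensional case $H$ and $V$ carry genuinely different norms. The bound on $\|Ev\|_{A^{-1}}$ via the transformed operator $B = A^{-1}E$, and the uniform bound on $\mu_\varepsilon(v)$, both rely on $\lambda_1 \le \mu(v) < \lambda_2$ and on $\varepsilon \le c_0$; everything else is routine triangle-inequality estimation. I would keep the constants symbolic until the final line and only then state the explicit $\varepsilon$-free bound for $c_2$.
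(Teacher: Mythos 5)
Your proposal follows the same route as the paper's proof: a triangle-inequality decomposition of $r_\varepsilon(v) - r(v)$ into an $A$-term, an $E$-term, and a $\mu$-difference term (the paper groups the remainder slightly differently into four pieces, $\mu(E_\varepsilon - Ev) + (\mu_\varepsilon-\mu)Ev + (\mu_\varepsilon-\mu)(E_\varepsilon - Ev)$, which is algebraically the same thing), followed by the embedding $\|\cdot\|_* \le \alpha\,|\cdot|_*$ to pass from $H^*$ to $V^*$, ellipticity to pass to $\|\cdot\|_{A^{-1}}$, and Lemma~\ref{lemma:approx_rayleigh} for the Rayleigh-quotient perturbation. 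One small slip: you write $\|Ev\|_{A^{-1}}^2 = (Bv,v)_V$, but in fact $\langle Ev, A^{-1}Ev\rangle = (A^{-1}Ev,v)_H = (Bv,v)_H = \|Bv\|_A^2$, whereas $(Bv,v)_V = \langle Ev,v\rangle = |v|^2$; both quantities are $\lesssim \|v\|_A^2$ so your conclusion still holds, but the detour through $B$ is unnecessary and your constant is off --- the paper simply estimates $\|Ev\|_{A^{-1}} \le \sigma_0^{-1/2}\|Ev\|_* \le \sigma_0^{-1/2}\alpha\,|v| \le \sigma_0^{-1}\alpha^2\|v\|_A$ directly.
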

\begin{proof}
  The norm of the difference in the residual can be estimated by
  \begin{eqnarray*}
    \| r_{\varepsilon}(v) - r(v) \|_{*} 
    &\le& \| A_{\varepsilon}(v) - A v \|_{*}
    + \mu(v) \| E_{\varepsilon}(v) - E v \|_{*} \\
    && + |\mu_{\varepsilon}(v) - \mu(v) | \|E v\|_{*}
    + |\mu_{\varepsilon}(v) - \mu(v) |  \| E_{\varepsilon}(v) - E v \|_{*},
  \end{eqnarray*}
  where we used the triangle inequality. We have to estimate $\|Ev\|_{*}$ and 
  $\| E_{\varepsilon}(v) - E v \|_{*}$ in the norm stemming from $H^*$. For that purpose, 
  for $f \in H^*$, the dual norm can be estimated by $\| \left. f \right|_V \|_{*} \le \alpha | f  |_{*}$. 
  Using this, the definition of the approximate operators (definition \ref{def:approx_op}) 
  and the result on the approximate Rayleigh quotients (lemma \ref{lemma:approx_rayleigh}) gives
  \begin{eqnarray*}
    \| r_{\varepsilon}(v) - r(v) \|_{*} 
    &\le& \varepsilon \|v\|
    + \mu(v) \alpha^2 \varepsilon \|v\|  
    + c_1 \varepsilon \alpha^2 \|v\|
    + c_1 \varepsilon^2 \alpha^2 \|v\| \\
    &=& \{1 + \alpha^2 [\mu(v)  + c_1(1+\varepsilon)]\} \|v\| \varepsilon.
  \end{eqnarray*}
  Since $A^{-1}$ is bounded as a mapping between $V^{*}$ and $V$ with norm $\sigma_0^{-1/2}$, setting
  \begin{eqnarray*}
    c_2 = \sigma_o^{-1/2} \{1 + \alpha^2 [\mu(v)  + c_1(1+\varepsilon)]\},
  \end{eqnarray*}
  we can estimate
  \begin{eqnarray*}
    \|r_{\varepsilon}(v) - r(v)\|_{A^{-1}}  \le c_2 \varepsilon \|v\|.
  \end{eqnarray*}
  Again estimating $\mu(v) \le \lambda_2$, $\varepsilon \le 1/2$ and $c_1$ by the upper bound in lemma \ref{lemma:approx_rayleigh}, $c_2$ can be bound from above independent of $\varepsilon$ and $v$.
\end{proof}

\subsection{Error estimation from approximate residuals}

In view of the convergence results for the perturbed PINVIT as stated in Theorem \ref{thm:rayleigh_reduction}, 
we may still guarantee convergence of the perturbed algorithm if we admit for perturbations for which  $\gamma = \gamma_P + \gamma_{\xi}<1$, where $\gamma_P<1$ is the constant entering via the preconditioner. For simplicity, we fix $\gamma_{\xi}:= \frac {1-\gamma_P}{2}$ in the sequel. To retain convergence, we have to bound the perturbation by the \emph{accuracy criterion}
\begin{equation}
  \label{eq:accuracy}
  \|P^{-1}(r_{\varepsilon}(v)-r(v))\|_A / \|v\|_A \le \gamma_{\xi} \rho(v) ;
\end{equation}
using Lemma \ref{lemma:residual_approx}, it is obvious that this  can be guaranteed
if only $\varepsilon$ is chosen small enough. However, approximating the residual with 
more accuracy than necessary at the present stage may lead to unnecessary costs. 
Therefore, this section is dedicated to the analysis of an algorithm iteratively
determining $\varepsilon(v)$ (for a given iterand $v$) in a way that for each step,  
$\varepsilon(v) \gtrsim \max( \tau, \rho(v) ),$ where $\tau$ is a target accuracy for the residual. Note that by this,
a target accuracy for the subspace error may be fixed, cf. Theorem \ref{thm:equivalenceresiduum}.

As a first step we define an efficient and reliable error estimator for $\rho(v)$ provided
that the tolerance $\varepsilon$ used for the computation is small
enough.
\begin{lemma}
  Let the estimator of the residual $\rho(v)$ be defined by
  \begin{equation*}
    \rho_{\varepsilon}(v) = \|r_{\varepsilon}(v)\|_{P^{-1}}/\|v\|_{P}.
  \end{equation*}
  For sufficiently small $\varepsilon$ it is efficient as well as 
  reliable in the sense that
  \begin{equation*}
    (1+\gamma_P)^{-1} \rho_{\varepsilon}(v) - c_2 \varepsilon \le \rho(v) \le 
    (1-\gamma_P)^{-1} \rho_{\varepsilon}(v) + c_2 \varepsilon.
  \end{equation*}
  Furthermore one may choose $\varepsilon \gtrsim \rho(v)$ such that
  $\rho_{\varepsilon}(v) \sim \rho(v)$.
  \label{lemma:residual_estimator}
\end{lemma}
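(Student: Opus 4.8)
The plan is to compare the computable quantity $\rho_\varepsilon(v) = \|r_\varepsilon(v)\|_{P^{-1}}/\|v\|_P$ with the true residual $\rho(v) = \|r(v)\|_{A^{-1}}/\|v\|_A$ by inserting and removing $r(v)$ and exploiting the spectral equivalence of $P$ and $A$. First I would observe that by the spectral equivalence \eqref{eq:precond}, the norms $\|\cdot\|_{P^{-1}}$ and $\|\cdot\|_{A^{-1}}$ are equivalent on $V^*$ (and $\|\cdot\|_P \sim \|\cdot\|_A$ on $V$) with constants governed by $\gamma_P$; indeed $\|\mathrm{Id} - P^{-1}A\|_A \le \gamma_P$ translates into $(1-\gamma_P)\|f\|_{A^{-1}} \le \|f\|_{P^{-1}} \le (1+\gamma_P)\|f\|_{A^{-1}}$ and similarly for the $V$-norms, so in particular $\|r_\varepsilon(v)\|_{P^{-1}}/\|v\|_P$ is comparable to $\|r_\varepsilon(v)\|_{A^{-1}}/\|v\|_A$ up to the factors $(1\pm\gamma_P)^{\mp1}$.

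Next I would apply the triangle inequality in the form $\|r_\varepsilon(v)\|_{A^{-1}} \le \|r(v)\|_{A^{-1}} + \|r_\varepsilon(v) - r(v)\|_{A^{-1}}$ and the reverse, and divide by $\|v\|_A$. By Lemma \ref{lemma:residual_approx}, the cross term contributes at most $c_2\varepsilon$. Combining this with the norm-equivalence constants from the preconditioner then yields
\begin{equation*}
  (1+\gamma_P)^{-1}\rho_\varepsilon(v) - c_2\varepsilon \le \rho(v) \le (1-\gamma_P)^{-1}\rho_\varepsilon(v) + c_2\varepsilon,
\end{equation*}
which is exactly the claimed two-sided bound, valid once $\varepsilon \le c_0$ so that Lemma \ref{lemma:residual_approx} applies (this is the meaning of ``sufficiently small $\varepsilon$'').

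For the final assertion that one may choose $\varepsilon \gtrsim \rho(v)$ with $\rho_\varepsilon(v) \sim \rho(v)$, I would argue as follows: pick $\varepsilon = \theta\,\rho(v)$ for a fixed small constant $\theta > 0$ (also capped by $c_0$). Then the additive term $c_2\varepsilon = c_2\theta\,\rho(v)$ in the estimator bound becomes a fixed fraction of $\rho(v)$, so choosing $\theta$ small enough (depending only on $\gamma_P$ and $c_2$) absorbs it into the multiplicative part; rearranging the two-sided inequality gives $c\,\rho(v) \le \rho_\varepsilon(v) \le C\,\rho(v)$ with constants $c, C$ depending only on $\gamma_P$, $c_2$, $\theta$. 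The (mild) subtlety is that $\rho(v)$ is not known a priori, so in practice one determines such an $\varepsilon$ by the adaptive loop alluded to after the lemma; but for the purely analytic statement here it suffices to exhibit the admissible choice.

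The main obstacle is bookkeeping the two different pairs of norms cleanly: the estimator is naturally expressed in the $P$-norms while the residual $\rho(v)$ and Lemma \ref{lemma:residual_approx} live in the $A$-norms, so one must be careful to apply the preconditioner equivalence in the right direction on each side of the inequality and to track that the constant $c_2$ from Lemma \ref{lemma:residual_approx} is indeed independent of $v$ and $\varepsilon$ in the regime $\lambda_1 \le \mu(v) < \lambda_2$, $\varepsilon \le c_0$. No genuinely new estimate is needed beyond Lemma \ref{lemma:residual_approx} and \eqref{eq:precond}.
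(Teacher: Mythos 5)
Your proof is correct and matches the paper's (very brief) argument: the triangle inequality plus Lemma~\ref{lemma:residual_approx} controls the $A^{-1}$-norm residual ratio, the spectral equivalence of $P$ and $A$ converts between $\rho_\varepsilon$ and that ratio, and the choice $\varepsilon\lesssim\rho(v)$ absorbs the additive $c_2\varepsilon$ term into multiplicative constants (the paper takes $\varepsilon\le(2c_2)^{-1}\rho(v)$, giving $\tfrac23(1+\gamma_P)^{-1}\rho_\varepsilon(v)\le\rho(v)\le 2(1-\gamma_P)^{-1}\rho_\varepsilon(v)$). One minor slip in your intermediate step: \eqref{eq:precond} yields equivalence with exponents $\pm\tfrac12$, namely $(1-\gamma_P)^{1/2}\|f\|_{A^{-1}}\le\|f\|_{P^{-1}}\le(1+\gamma_P)^{1/2}\|f\|_{A^{-1}}$ and $(1+\gamma_P)^{-1/2}\|v\|_A\le\|v\|_P\le(1-\gamma_P)^{-1/2}\|v\|_A$, not the linear factors you wrote; however, these combine in the ratio $\rho_\varepsilon=\|r_\varepsilon\|_{P^{-1}}/\|v\|_P$ to give exactly the net factors $(1\pm\gamma_P)^{\pm1}$ you subsequently asserted, so the stated two-sided bound is unaffected.
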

\begin{proof}
  The given inequalities are a direct result of the norm equivalence between $A$ and $P$ 
  as well as Lemma \ref{lemma:residual_approx}.    

  Now choosing $\varepsilon \le (2 c_2)^{-1} \rho(v)$ results in 
  \begin{equation*}
    \frac{2}{3}(1+\gamma_P)^{-1} \rho_{\varepsilon}(v) \le \rho(v) \le 
    2 (1-\gamma_P)^{-1} \rho_{\varepsilon}(v),
  \end{equation*}
  which shows the second assertion.
\end{proof}
Now we can use this estimator to
test if the accuracy criterion \eqref{eq:accuracy} is already met.
\begin{lemma}
  The preconditioned approximate residual $P^{-1} r_{\varepsilon}(v)$ fulfills
  the accuracy criterion of Equation \eqref{eq:accuracy} if 
  \begin{equation}
    \varepsilon \le c_3 \rho_{\varepsilon}(v),
    \label{eq:accuracy_test}
  \end{equation}
  where $c_3$ is a constant independent of $v$ and $\varepsilon$. 
  Furthermore $\varepsilon$ can be chosen such that $\varepsilon \gtrsim \rho(v)$.
  \label{lemma:accuracy_test}
\end{lemma}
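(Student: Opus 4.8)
The plan is to reduce the accuracy criterion \eqref{eq:accuracy} to the computable test \eqref{eq:accuracy_test} by chaining together the estimates already established. First I would invoke the spectral equivalence of $A$ and $P$ — i.e. the bound $\|\mathrm{Id} - P^{-1}A\|_A \le \gamma_P$ of \eqref{eq:precond} — to pass from the $P^{-1}$-norm appearing in $\rho_{\varepsilon}$ to the $A$-norm appearing on the left of \eqref{eq:accuracy}: since $P$ and $A$ are spectrally equivalent with the same constant, $\|P^{-1}w\|_A$ and $\|w\|_{P^{-1}}$ are comparable up to factors like $(1\pm\gamma_P)$, so $\|P^{-1}(r_{\varepsilon}(v)-r(v))\|_A/\|v\|_A$ is controlled by $\|r_{\varepsilon}(v)-r(v)\|_{A^{-1}}/\|v\|_A$, which by Lemma \ref{lemma:residual_approx} is at most $c_2\varepsilon$ (after rescaling by the norm equivalence $\|v\| \sim \|v\|_A$). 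Thus the left-hand side of \eqref{eq:accuracy} is $\lesssim c_2\varepsilon$.

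Next I would produce a lower bound for the right-hand side $\gamma_{\xi}\rho(v)$ in terms of the computable quantity $\rho_{\varepsilon}(v)$. This is exactly the reliability half of Lemma \ref{lemma:residual_estimator}: for sufficiently small $\varepsilon$ one has $\rho(v) \ge (1+\gamma_P)^{-1}\rho_{\varepsilon}(v) - c_2\varepsilon$. Combining the two bounds, \eqref{eq:accuracy} is guaranteed as soon as
\begin{equation*}
  \tilde{c}\,\varepsilon \le \gamma_{\xi}\bigl[(1+\gamma_P)^{-1}\rho_{\varepsilon}(v) - c_2\varepsilon\bigr]
\end{equation*}
for the appropriate constant $\tilde{c}$ absorbing $c_2$ and the norm-equivalence factors; solving this linear inequality for $\varepsilon$ yields $\varepsilon \le c_3\rho_{\varepsilon}(v)$ with
\begin{equation*}
  c_3 = \frac{\gamma_{\xi}(1+\gamma_P)^{-1}}{\tilde{c} + \gamma_{\xi}c_2},
\end{equation*}
which is manifestly independent of $v$ and $\varepsilon$, depending only on $\gamma_P$, $c_2$ and the fixed choice $\gamma_{\xi} = (1-\gamma_P)/2$.

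For the final assertion that $\varepsilon$ may be chosen with $\varepsilon \gtrsim \rho(v)$, I would argue as in the proof of Lemma \ref{lemma:residual_estimator}: restricting additionally to $\varepsilon \le (2c_2)^{-1}\rho(v)$ forces $\rho_{\varepsilon}(v) \sim \rho(v)$, so the test $\varepsilon \le c_3\rho_{\varepsilon}(v)$ is compatible with an $\varepsilon$ of order $\rho(v)$ — concretely, any $\varepsilon$ with $\varepsilon \le \min\{c_3',\, (2c_2)^{-1}\}\rho(v)$ for a suitable $c_3'$ both passes the test and satisfies $\varepsilon \gtrsim \rho(v)$ in the sense that it need not be taken smaller than a fixed multiple of $\rho(v)$.

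The routine obstacle here is bookkeeping the norm equivalences: the criterion \eqref{eq:accuracy} is phrased in the $A$-norm while $\rho_{\varepsilon}$ lives in the $P$/$P^{-1}$ norms, and Lemma \ref{lemma:residual_approx} is stated with $\|v\|$ on the right whereas \eqref{eq:accuracy} has $\|v\|_A$, so one must carefully carry the constants $\sigma_0,\sigma_1,\gamma_P$ through each substitution. The only genuinely delicate point is ensuring the bracket $(1+\gamma_P)^{-1}\rho_{\varepsilon}(v) - c_2\varepsilon$ stays positive, i.e. that the "sufficiently small $\varepsilon$" hypothesis of Lemma \ref{lemma:residual_estimator} is not vacuous — but this is automatic once we are in the regime $\varepsilon \le c_3\rho_{\varepsilon}(v)$ with $c_3$ small, since then $c_2\varepsilon$ is a strict fraction of $(1+\gamma_P)^{-1}\rho_{\varepsilon}(v)$. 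No part of the argument is conceptually hard; it is a consolidation of Lemmas \ref{lemma:residual_approx} and \ref{lemma:residual_estimator}.
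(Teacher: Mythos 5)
Your proposal is correct and follows essentially the same route as the paper's proof: bound the left-hand side of \eqref{eq:accuracy} by $c_2\varepsilon$ via Lemma \ref{lemma:residual_approx} together with spectral equivalence of $P$ and $A$, bound $\rho(v)$ from below via the reliability half of Lemma \ref{lemma:residual_estimator}, solve the resulting linear inequality for $\varepsilon$, and then re-use the $\varepsilon \le (2c_2)^{-1}\rho(v)$ regime to confirm $\varepsilon \gtrsim \rho(v)$. No substantive differences.
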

\begin{proof}
  The idea is to bound both sided of Equation \eqref{eq:accuracy} and to 
  require that the bounds satisfy the inequality. Applying Lemma
  \ref{lemma:residual_approx} gives for the right hand side
  \begin{equation*}
    \|P^{-1}(r_{\varepsilon}(v)-r(v)\|_{A} \le \|P^{-1}A\|_A \|A^{-1}(r_{\varepsilon}(v)-r(v))\|_A
    \le (1+\gamma_P)^{1/2} c_2 \varepsilon,
  \end{equation*}
  while for the left hand side we simply apply the lower bound of 
  Lemma \ref{lemma:residual_estimator}. If we choose
  \begin{equation*}
    c_3 = [(1+\gamma_P)^{1/2} c_2 + \gamma_{\xi} c_2]^{-1} \frac{\gamma_{\xi}}{1 - \gamma_P}
  \end{equation*}
  the assertion follows readily.

  Next we have to prove that solutions of this inequality do not get
  too small. In view of Lemma \ref{lemma:residual_approx} choosing 
  $\varepsilon \le (2c_2)^{-1} \rho(v)$ will lead to
  \begin{equation*}
    \rho_{\varepsilon} \ge  \frac{1-\gamma_P}{2} \rho(v).
  \end{equation*}
  Therefore for all 
  \begin{equation*}
    \varepsilon \le \min( (2c_2)^{-1}, c_3 (1-\gamma_P) /2)\rho(v)  
  \end{equation*}
  the inequality holds, showing that one can choose $\varepsilon \gtrsim \rho(v)$.
\end{proof} 

\subsection{Convergence of PPINVIT}

Based on the estimators of the previous subsection a simple algorithm can be 
devised to calculate a suitable tolerance $\varepsilon$: Starting with an 
initial guess we successively halve $\varepsilon$ until the accuracy criterion of Equation \eqref{eq:accuracy_test}
is met. For this $\varepsilon$ we can still guarantee that it is proportional
to the actual error indicated by $\rho(v)$.

The last ingredient for our PPINVIT algorithm is a stopping criterion indicating when the exact residual has dropped below the given target accuracy $\tau$. Note that in this context the work load in might still become too high if the algorithm tries to determine $r_{\varepsilon}$ according to Equation \eqref{eq:accuracy_test}, while $r$ is already smaller that $\tau$. To prevent this situation, we add a stopping test based on Lemma \ref{lemma:residual_estimator}.
Combining this with the error estimator and one step of PPINVIT leads
to the following algorithm:\\

\begin{algorithmic}
  \STATE $\mathrm{PPINVIT\_STEP}(v,\tau) \rightarrow v'$ \\
\hrulefill\\

  \STATE $\varepsilon := c_0$ 
  \LOOP
    \STATE $r := A_{\varepsilon}(v) - \mu_{\varepsilon}(v) E_{\varepsilon}(v)$
    \STATE $\rho := \|r\|_{P^{-1}} / \|v\|_P$
    \IF{$(1-\gamma_P)^{-1}\rho + c_2 \varepsilon \le \tau$}
      \STATE \emph{\{target accuracy reached, criterion Lemma \ref{lemma:residual_estimator}\}}
      \STATE \textbf{return} $v$ 
    \ENDIF
    \IF{$\varepsilon \le c_3 \rho$}  
      \STATE \emph{\{accuracy for residual reached, criterion Lemma \ref{lemma:accuracy_test}\}}
      \STATE \textbf{return} $v' := v - P^{-1} r$
    \ENDIF
    \STATE $\varepsilon := \varepsilon/2$
  \ENDLOOP\\
\hrulefill\\
\end{algorithmic}

A similar algorithm has already been used for the determination of refined 
sets in the context of adaptive treatment of PDEs, cf. the GROW procedure in \cite{ghs07}.
Note also that the intermediate computations of $ r_{\varepsilon}(v) $ need not cause 
much additional computational effort; indeed, when using the APPLY algorithm from \cite{drsz08},
the computation of the residual with a lower accuracy is part of 
the computation of the residual with a higher accuracy, therefore, intermediate values 
can be used to estimate the size of $r(v)$ and to effectively approximate a suitable 
error tolerance $\varepsilon$.

The properties of the algorithm $\mathrm{PPINVIT\_STEP}$, which follow from combining the general theory of convergence,
Theorem \ref{thm:rayleigh_reduction}, with the results of this section are compiled in 

%

\begin{theorem}
  For all starting vectors $v\not=0$ such that the Rayleigh quotient fulfills 
  $\lambda(v) < \lambda_2$, iterating PPINVIT\_STEP generates a sequence of vectors, for which the error
in the Rayleigh quotients decreases geometrically according to Theorem \ref{thm:rayleigh_reduction} 
where, with the above choice of $\gamma_{\xi}$, $\gamma=(1+\gamma_P)/2$. In each step, the accuracy $\varepsilon$ is proportional to $\rho(v)$. \\
Combining Theorems \ref{thm:rayleigh_reduction} and \ref{thm:equivalenceresiduum}
shows that along with the Rayleigh quotients, the residual $\|Av - \lambda(v) Ev\|_{A^{-1}}$
will decrease. Hence the algorithm terminates after finitely many steps. \\
For the final iterand $v^{o}$, there holds $\rho(v^{o}) \leq \tau$, so that in turn, $\sin \phi_A(v^{o}, \mathcal E_1)  \lesssim \tau$. The maximal accuracy $\varepsilon$ needed for the approximate applications of
  $A$ and $E$ stays bounded by
  \begin{equation*}
    \varepsilon \gtrsim \max( \tau, \rho(v) ).
  \end{equation*}
\end{theorem}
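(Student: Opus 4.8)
The plan is to verify the three assertions of the theorem in turn, each by assembling the lemmas of this section with the abstract convergence result of Theorem \ref{thm:rayleigh_reduction}. First, I would show that the \texttt{PPINVIT\_STEP} loop terminates for every input $v\neq 0$ with $\lambda(v) < \lambda_2$. The loop starts at $\varepsilon = c_0$ and halves $\varepsilon$ each pass. If the current iterand is not yet accurate (so the first \texttt{if} does not trigger), then by Lemma \ref{lemma:accuracy_test} the second test $\varepsilon \le c_3 \rho_\varepsilon(v)$ is satisfied as soon as $\varepsilon \le \min((2c_2)^{-1}, c_3(1-\gamma_P)/2)\,\rho(v)$, a strictly positive bound depending only on $v$ and fixed constants; since halving reaches any positive threshold after finitely many steps, the loop exits. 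Combined with the ``furthermore'' part of Lemma \ref{lemma:accuracy_test}, the exit value satisfies $\varepsilon \gtrsim \rho(v)$, which also proves the final displayed bound $\varepsilon \gtrsim \max(\tau,\rho(v))$ once the stopping test is accounted for (the first \texttt{if} prevents the loop from driving $\varepsilon$ below the $\tau$-scale).

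Second, I would establish the geometric decrease of the Rayleigh quotient error along the generated sequence. The step returns $v' = v - P^{-1} r_\varepsilon(v)$, which is exactly the PPINVIT iteration of Definition \ref{def:ppinvit} with perturbation $\xi = P^{-1}(r(v) - r_\varepsilon(v))$ (after normalization, which does not affect Rayleigh quotients). The exit condition $\varepsilon \le c_3\rho_\varepsilon(v)$ is precisely what Lemma \ref{lemma:accuracy_test} designed so that the accuracy criterion \eqref{eq:accuracy}, $\|\xi\|_A/\|v\|_A \le \gamma_\xi \rho(v)$, holds. With $\gamma_\xi := (1-\gamma_P)/2$ we get $\gamma = \gamma_P + \gamma_\xi = (1+\gamma_P)/2 < 1$, so Theorem \ref{thm:rayleigh_reduction} applies and gives the claimed contraction of $(\lambda(v)-\lambda_k)/(\lambda_{k+1}-\lambda(v))$ with factor $q^2(\gamma,\lambda_k,\lambda_{k+1})$ — note that the hypothesis $\lambda(v) < \lambda_2$ together with the ``either $\lambda' < \lambda_k$ or $\lambda_k \le \lambda' < \lambda_{k+1}$'' dichotomy keeps all iterands below $\lambda_2$, so the relevant index is $k=1$ after the first step (or the Rayleigh quotient is pushed below $\lambda_1$ and then returns monotonically).

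Third, I would combine Theorems \ref{thm:rayleigh_reduction} and \ref{thm:equivalenceresiduum} to conclude termination and the final error bound. Since $\lambda(v^n) \to \lambda_1$ geometrically, the lower bound in Theorem \ref{thm:equivalenceresiduum}, $\rho(v) \le \frac{3\lambda(v)}{\lambda_1}\sin\phi_A(v,\mathcal E_1)$ together with the upper bound $\sin\phi_A(v,\mathcal E_1) \le \sqrt{(\lambda_2/\lambda_1)(\lambda(v)-\lambda_1)/(\lambda_2-\lambda(v))}$, forces $\rho(v^n) \to 0$. Hence the reliability half of Lemma \ref{lemma:residual_estimator}, $(1-\gamma_P)^{-1}\rho_\varepsilon(v) + c_2\varepsilon \ge \rho(v)$, shows that eventually the first \texttt{if} triggers — but more carefully, one has to observe that when $\rho(v)$ is small the loop may exit via the $\tau$-test before $\varepsilon$ becomes as small as $\rho(v)$; in that case the stopping inequality $(1-\gamma_P)^{-1}\rho_\varepsilon(v) + c_2\varepsilon \le \tau$ and the reliability bound give $\rho(v^o) \le \tau$ directly, and then $\sin\phi_A(v^o,\mathcal E_1) \lesssim \tau$ follows from the upper bound in Theorem \ref{thm:equivalenceresiduum} after expressing $(\lambda(v^o)-\lambda_1)/(\lambda_2-\lambda(v^o))$ through $\rho(v^o)$ via the Temple--Kato-type estimate. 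The main obstacle I anticipate is the bookkeeping around the interaction of the two exit tests: one must show that the algorithm never gets ``stuck'' refining $\varepsilon$ indefinitely when $r(v)$ is already below $\tau$ (handled by the first test) while simultaneously, when $r(v)$ is \emph{not} yet small, the first test does \emph{not} fire prematurely and abort with an inaccurate iterand — this requires checking that $\tau < (1-\gamma_P)^{-1}\rho_\varepsilon(v)$ whenever $\rho(v)$ is genuinely above the target scale, using the efficiency bound of Lemma \ref{lemma:residual_estimator}. Everything else is a routine chaining of the already-proven inequalities.
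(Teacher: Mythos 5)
Your proposal is correct and follows the same route the paper intends: the theorem is stated without a dedicated proof precisely because it is the chaining of Lemma \ref{lemma:residual_estimator}, Lemma \ref{lemma:accuracy_test} and Theorem \ref{thm:rayleigh_reduction} that you spell out. Two small imprecisions worth fixing: (i) the parenthetical about the Rayleigh quotient being ``pushed below $\lambda_1$'' cannot occur, since $\lambda_1$ is the infimum of the Rayleigh quotient, so for $k=1$ the dichotomy of Theorem \ref{thm:rayleigh_reduction} collapses to the single case $\lambda_1\le\lambda'<\lambda_2$; and (ii) the worry that the first test might ``fire prematurely'' is resolved by the \emph{reliability} half of Lemma \ref{lemma:residual_estimator}, not the efficiency half as you suggest --- if the test $(1-\gamma_P)^{-1}\rho_{\varepsilon}(v)+c_2\varepsilon\le\tau$ holds, then $\rho(v)\le(1-\gamma_P)^{-1}\rho_{\varepsilon}(v)+c_2\varepsilon\le\tau$ automatically, so the returned iterand always satisfies the target tolerance and no additional check is needed.
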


Combining the algorithm with a time to
time coarsening worked out in more detail in \cite{drsz08} leads to an optimally convergent
algorithm. It is likely that as in the case of boundary value problems
\cite{ghs07} a coarsening of the updates $P^{-1}r_{\varepsilon}(v)$
instead of the iterands gives a convergent algorithm with improved
performance however we were not able to prove this conjecture yet. 

\section{Application to planar eigenvalue problems}
\label{sec:wavelet}

In this section the application of the abstract eigenvalue solver is discussed
for the model case of a planar eigenvalue problem discretized by a stable
wavelet base. The corresponding adaptive procedure is superior to uniform
refinement if the corresponding eigenfunction has higher
regularity in terms of certain Besov spaces than in the scale of Sobolev
spaces.

Hence first we will address the Besov regularity of the eigenfunctions
and show that the eigenfunction can be approximated with an arbitrary high
rate, provided that the wavelet ansatz function have sufficiently many vanishing
moments.

After that we provide some numerical tests for the case of an L-shaped domain 
with piecewise linear wavelets. The numerical results approve the theoretical 
predictions.

As a model we introduce the Poisson eigenvalue problem on a polygonal
domain with Dirichlet boundary condition: Let $\Omega \subset \mathbb R^2$ be a bounded open polygonal domain with
vertices $x^{(1)}, \ldots, x^{(d)}$ such that the interior angles $\alpha_i$ at $x^{(i)}$
satisfy $0 < \alpha_i < 2 \pi$ for all $i=1,\ldots,d$. Let the
Poisson eigenvalue problem with homogenous Dirichlet boundary condition 
\begin{align}  \label{eq:poisson}
  -\Delta u &= \lambda u, \quad \textnormal{on } \Omega, \\
  u &= 0, \quad \textnormal{on } \partial \Omega
\end{align}
be given.

\subsection{Regularity}

In what follows we want to determine the regularity of the eigenfunctions $u$
in terms of Besov norms. To achieve this we employ the regularity results
with respect to certain weighted Sobolev spaces as were described in \cite{kmr97},
see also the references therein.

For the construction of the weighted Sobolev spaces define for each
vertex $x^{(i)}$ an infinitely differentiable cut-off function $\xi_i$ that is equal to one in 
a sufficiently small neighborhood of $x^{(i)}$ and zero outside, such that
the support of the functions $\xi_i$ do not intersect. Define 
$\xi_0 := 1 - \sum_{i=1}^d \xi_i$.

\begin{definition}
  Let the domain $\Omega$ be given as above. For the integer $l\ge 0$ 
  and $\beta \in \mathbb R$ define the weighted Sobolev space $V^{l}_{2,\beta}$
  as the closure of $C^{\infty}(\Omega)$ with respect to the norm
  \begin{equation*}
    \|u\|_{V^{l}_{2,\beta}(\Omega)}^2 = \| \xi_0 u\|_{W^{l}_2(\Omega)}^2 
    + \sum_{i=1}^{d} \sum_{|\alpha|\le l} \| \rho_i^{|\alpha|+\beta-l} \partial^{\alpha}(\xi_i u)\|_{L_2(\Omega)}^2,
  \end{equation*}
  where $\rho_i$ is the Euclidean distance to $x^{(i)}$ and $W^l_2(\Omega)$ are
  classical Sobolev spaces. Moreover define
  the weighted Sobolev space $\hat V^{l}_{2,\beta}(\Omega)$ as the closure
  of $C^{\infty}_0(\Omega)$ with respect to $\|u\|_{V^{l}_{2,\beta}(\Omega)}$.
  \label{def:weightedsobolev}
\end{definition}
Note that this definition is a slightly simplified version of the definition 
in paragraph 6.2.1 of \cite{kmr97}. Here we restrict ourselves to the case
of polygonal domains and scalar $\beta$.

From Theorem 6.6.1 in \cite{kmr97} we can deduce the following regularity result:
\begin{theorem}
  The operator $-\Delta$  is an isomorphism between $\hat V^l_{2,l-1}(\Omega)$ 
  and $V^{l-2}_{2,l-1}(\Omega)$ for all $l \ge 2$.  
  \label{thm:sobolevreg}
\end{theorem}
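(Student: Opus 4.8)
The plan is to deduce Theorem \ref{thm:sobolevreg} directly from the cited isomorphism theorem (Theorem 6.6.1 in \cite{kmr97}), checking that the hypotheses of that theorem are met in our simplified situation and that its conclusion specializes to the statement we want. The general Kondrat'ev-type result concerns the Dirichlet Laplacian on a domain with conical (here: angular) points and asserts that $-\Delta$ maps a weighted Sobolev space of order $l$ isomorphically onto one of order $l-2$, \emph{provided} the weight exponent avoids a discrete exceptional set determined by the eigenvalues of the operator pencils associated to each corner. For the two-dimensional polygon, the pencil at the vertex $x^{(i)}$ with interior angle $\alpha_i$ has eigenvalues $k\pi/\alpha_i$, $k\in\mathbb Z\setminus\{0\}$, and the forbidden weights are those $\beta$ for which $l-1-\beta$ (the "shift" $\beta-l+1=0$ in our normalization) coincides with one of these. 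The first thing I would do is record that with the specific choice $\beta=l-1$ the relevant quantity is $\beta-l+1=0$, and $0$ is never an eigenvalue of the pencil since $k\neq 0$; hence the exceptional set is avoided \emph{uniformly in $l$ and in all the angles $\alpha_i\in(0,2\pi)$}, which is exactly why the statement holds for every $l\ge 2$ with no restriction on the geometry.

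Next I would address the boundary condition and the two scales of spaces. The target space on the right is $V^{l-2}_{2,l-1}(\Omega)$ (closure of $C^\infty(\Omega)$), with no boundary condition, which is the natural data space; the domain space on the left is $\hat V^{l}_{2,l-1}(\Omega)$ (closure of $C^\infty_0(\Omega)$), encoding the homogeneous Dirichlet condition. I would point out that in \cite{kmr97} the Dirichlet problem is treated precisely in this pairing, so the homogeneous boundary trace is built into the definition of the domain space and no separate trace theorem is needed here — this is one reason for preferring the "hat" spaces. The only genuine verification is that Definition \ref{def:weightedsobolev}'s norm agrees (up to equivalence) with the one used in \cite[\S 6.2.1]{kmr97} when restricted to polygons and scalar $\beta$: the cut-off decomposition $1=\xi_0+\sum_i\xi_i$ localizes to a neighborhood of each corner, where the weighted norm $\sum_{|\alpha|\le l}\|\rho_i^{|\alpha|+\beta-l}\partial^\alpha(\xi_i u)\|_{L_2}$ is exactly Kondrat'ev's, and away from the corners $\xi_0 u$ lives in the ordinary Sobolev space $W^l_2$; this matches their setup verbatim once one drops the (here irrelevant) generality of vector-valued weight exponents and higher-dimensional edge/vertex stratifications.

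**The main obstacle** is not a deep one but a bookkeeping one: making sure the index conventions line up. Different references normalize the weight so that the "smooth" case corresponds to $\beta=0$, $\beta=l$, or $\beta=l-1$; one must check that "$\beta=l-1$ here" is the translate of "weight avoids $\{k\pi/\alpha_i\}$" there, i.e. that the shifted exponent is $\beta-l+1$ and equals $0$. I would do this by comparing the power of $\rho_i$ attached to the top-order derivative ($|\alpha|=l$): in our norm it is $\rho_i^{l+\beta-l}=\rho_i^{\beta}$, so with $\beta=l-1$ the top-order term carries weight $\rho_i^{l-1}$, which is the statement that the solution has the Kondrat'ev regularity "with zero shift" — precisely the generic, always-admissible case. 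Once this identification is in place, Theorem 6.6.1 of \cite{kmr97} gives boundedness of $-\Delta:\hat V^l_{2,l-1}\to V^{l-2}_{2,l-1}$, injectivity (the homogeneous problem has only the trivial solution in these spaces, since $0$ is not in the spectrum of the Dirichlet Laplacian), and surjectivity with bounded inverse; by the open mapping theorem this is the claimed isomorphism, and the argument is uniform in $l\ge 2$ because the admissibility condition on the weight is $l$-independent after the shift. I would close by remarking that this uniformity in $l$ is the crucial feature to be exploited in the sequel, since it is what will let the eigenfunctions be bootstrapped into $\hat V^l_{2,l-1}$ for arbitrarily large $l$ and hence into high-order Besov spaces.
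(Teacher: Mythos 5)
Your proposal is correct and takes essentially the same route as the paper: invoke the Kondrat'ev-type isomorphism theorem (Theorem~6.6.1 of~\cite{kmr97}) and verify that the weight choice $\beta=l-1$ gives shift zero, $l-1-\beta=0$, which lies strictly inside the admissible window $(-\pi/\alpha_i,\pi/\alpha_i)$ for every interior angle $\alpha_i\in(0,2\pi)$ and every $l\ge 2$. The only caveat is a minor imprecision in your phrasing that the theorem asserts an isomorphism ``provided the weight exponent avoids a discrete exceptional set''; avoiding the pencil spectrum alone gives Fredholmness, whereas the isomorphism requires the weight to lie in the specific interval around the energy line, which is exactly the condition $-\pi/\alpha_i< l-1-\beta<\pi/\alpha_i$ the paper records -- and which your zero-shift choice indeed satisfies.
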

\begin{proof}
  In particular this is a simplified version of theorem 6.6.1 in \cite{kmr97}.
  Just note that $\beta = l-1$ fulfills the condition  
  \begin{equation*}
    -\pi/\alpha_i < l-1-\beta < \pi/\alpha_i, \quad \textnormal{for all } i=1,\ldots,d.
  \end{equation*}
  since $0<\alpha_i<2\pi$ for all $i=1,\ldots,d$.
\end{proof}

Having established the regularity result for boundary value problems with
respect to the weighted Sobolev spaces we can use a bootstrapping technique
to deduce the regularity of the eigenfunctions.

\begin{theorem}
  Let $u \in H^1_0(\Omega)$ be an eigenfunction for the Poisson eigenvalue
  problem of equation \eqref{eq:poisson}. Then
  \begin{equation*}
    u \in \hat V^l_{2,l-1}, \quad \textnormal{for all } l \ge 0.
  \end{equation*}
  \label{thm:eigensobolev}
\end{theorem}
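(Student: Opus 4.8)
The plan is to prove $u \in \hat V^l_{2,l-1}$ for all $l \ge 0$ by induction on $l$, using the regularity theory of Theorem \ref{thm:sobolevreg} together with the eigenfunction equation $-\Delta u = \lambda u$. The base cases $l=0$ and $l=1$ should be immediate: for $l=0$ we must check that $u \in H^1_0(\Omega) \subset L_2(\Omega)$ embeds into $\hat V^0_{2,-1}$, and for $l=1$ that $H^1_0(\Omega) \subset \hat V^1_{2,0}$; both follow by comparing the weighted norm with the ordinary Sobolev norm, noting that for $l=1$ the weight exponent $|\alpha|+\beta-l$ is $0$ (for $|\alpha|=1$) or $-1$ (for $|\alpha|=0$), and the $\rho_i^{-1}$-weighted $L_2$-bound on $\xi_i u$ near a vertex follows from a Hardy-type inequality since $u$ vanishes on $\partial\Omega$.

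For the induction step, suppose $u \in \hat V^l_{2,l-1}$ for some $l \ge 1$; I want to conclude $u \in \hat V^{l+1}_{2,l}$. The natural route is to apply Theorem \ref{thm:sobolevreg} with the index $l+1$ in place of $l$: it says $-\Delta$ is an isomorphism from $\hat V^{l+1}_{2,l}(\Omega)$ onto $V^{l-1}_{2,l}(\Omega)$. Since $-\Delta u = \lambda u$, it suffices to show that the right-hand side $\lambda u$ lies in $V^{l-1}_{2,l}(\Omega)$; then $u$, being the unique preimage, automatically lies in $\hat V^{l+1}_{2,l}(\Omega)$. So the crux is the embedding
\begin{equation*}
  \hat V^{l}_{2,l-1}(\Omega) \hookrightarrow V^{l-1}_{2,l}(\Omega),
\end{equation*}
i.e. that raising the smoothness index down by one while raising the weight index up by one is a gain. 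Comparing the defining norms term by term: a derivative $\partial^\alpha(\xi_i u)$ with $|\alpha| \le l-1$ appears in the $V^{l-1}_{2,l}$-norm with weight exponent $|\alpha|+l-(l-1) = |\alpha|+1$, whereas in the $\hat V^{l}_{2,l-1}$-norm the same derivative appears with exponent $|\alpha|+(l-1)-l = |\alpha|-1$. Since near the vertex $\rho_i$ is bounded, the higher power $\rho_i^{|\alpha|+1}$ is controlled by $\rho_i^{|\alpha|-1}$, so each term of the target norm is dominated; the $\xi_0$-part is controlled since $W^l_2 \subset W^{l-1}_2$. This gives the embedding with a constant depending only on $\Omega$ and the cut-off functions.

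The main obstacle, and the only place requiring care, is the bootstrapping bookkeeping: one must make sure the shifted index in Theorem \ref{thm:sobolevreg} is applied with $l \ge 2$ (so the induction step as stated is valid only for $l+1 \ge 2$, i.e. $l \ge 1$, which is exactly the range we induct over after disposing of $l=0,1$ by hand), and that the membership $u \in \hat V^{l+1}_{2,l}$ produced by the isomorphism is in the space defined as the closure of $C_0^\infty(\Omega)$ rather than of $C^\infty(\Omega)$ — this is guaranteed precisely because Theorem \ref{thm:sobolevreg} maps \emph{into} $\hat V^{l+1}_{2,l}$, which already encodes the homogeneous boundary condition. A secondary subtlety is verifying that the hypothesis $u \in H^1_0$ is genuinely the anchor: the whole chain of implications is $u \in \hat V^1_{2,0} \Rightarrow \lambda u \in V^0_{2,1} \Rightarrow u \in \hat V^2_{2,1} \Rightarrow \lambda u \in V^1_{2,2} \Rightarrow u \in \hat V^3_{2,2} \Rightarrow \cdots$, and one should note explicitly that each application of Theorem \ref{thm:sobolevreg} is legitimate because its angle condition $-\pi/\alpha_i < (l+1)-1-l < \pi/\alpha_i$, i.e. $0 < \pi/\alpha_i$, holds trivially for every $l$. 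Once these points are checked, the induction closes and the theorem follows.
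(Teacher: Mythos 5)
Your proof is correct and follows essentially the same bootstrapping idea as the paper: repeatedly feed $-\Delta u = \lambda u$ through the weighted-Sobolev isomorphism of Theorem~\ref{thm:sobolevreg}. The differences are only bookkeeping: you step the smoothness index by one via the embedding $\hat V^l_{2,l-1} \hookrightarrow V^{l-1}_{2,l}$ and anchor the induction at $l=1$ with a Hardy inequality, whereas the paper steps by two over even $l$ via $\hat V^l_{2,l-1} \hookrightarrow V^l_{2,l+1}$, anchors at the weaker observation $u\in L^2 \Rightarrow u \in V^0_{2,1}$, and then recovers the remaining indices from the trivial monotonicity $\|u\|_{V^{l-1}_{2,l-2}} \le \|u\|_{V^l_{2,l-1}}$, so that no Hardy inequality is needed at all.
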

\begin{proof}
  The theorem is proven by induction. Given the existence of $u$ as an
  eigenfunction we basically use the fact that $u$ is the solution of
  the boundary value problem with right hand side $\lambda u$. Hence with
  every application of Theorem \ref{thm:sobolevreg} we gain smoothness
  for the eigenfunction $u$.

  We start the induction by noting that the eigenfunction $u \in L^2(\Omega)$.
  From Definition \ref{def:weightedsobolev} it follows directly that
  \begin{equation*}
    \|u\|_{V^0_{2,1}} \lesssim \|u\|_{L^2(\Omega)}.
  \end{equation*}
  Applying Theorem \ref{thm:sobolevreg} assures that the solution $u \in \hat V^{2}_{2,1}$.

  Using the above fact as a starting point for an induction we will show that
  $u \in \hat V^{l}_{2,l-1}$ for all $l=2,4,\ldots$. Suppose that $u \in \hat V^{l}_{2,l-1}$
  for $l$ even. Then by Definition \ref{def:weightedsobolev} of the weighted Sobolev spaces it
  is obvious that $u \in V^{l}_{2,l+1}$. Using Theorem \ref{thm:sobolevreg} it follows that
  \begin{equation*}
    \| u \|_{\hat V^{l+2}_{2,l+1}} \lesssim \| u \|_{V^{l}_{2,l+1}}
  \end{equation*}
  and therefore $u \in \hat V^{l+2}_{2,l+1}$. 

  Hence $u \in \hat V^{l}_{2,l-1}$ for all $l=2,4,\ldots$. Noting that
  \begin{equation*}
    \| u \|_{V^{l-1}_{2,l-2}} \le \| u \|_{V^{l}_{2,l-1}}
  \end{equation*}
  by Definition \ref{def:weightedsobolev} finishes the proof.
\end{proof}

In conclusion we have established arbitrarily high Sobolev regularity for the
eigenfunctions with respect to the appropriate weights. It remains to show that
this also implies regularity in the sense of Besov spaces. In order to show 
that, we use the norm equivalence between certain Besov
norms and the weighted discrete norms of the coefficients of a corresponding wavelet
expansion. We will follow the line of proof that was also used to prove
Besov regularity in \cite{dahlke99a,dd97} for the corresponding
boundary value problem. 

For that purpose we use two-dimensional wavelets constructed from 
univariate orthonormal Daubechies wavelets \cite{daubechies92}.

Then for the set of supports
\begin{equation*}
 \quad I = 2^{-j}k + 2^{-j}[0,1]^2,
  \quad k \in \mathbb Z^2, \; j \in \mathbb Z,
\end{equation*}
the
functions
\begin{equation*}
  \eta_I := \eta_{j,k} := 2^{j}\eta(2^j \cdot -k), \quad \eta \in \Psi,
\end{equation*}
form an orthonormal basis in $L^2(\mathbb R^2)$. 
Here $\Psi$ is a set of three 
wavelets, since the space dimension is two. 

For sufficiently regular wavelets there is a norm equivalence between 
the Besov norm and the discrete norm of the wavelet expansion: A function
$f \in B^{\alpha}_{\tau}(L^{\tau}(\mathbb R^2))$ for $1/\tau = \alpha/2 + 1/2$
if and only if
\begin{equation*}
  \| P_0(f)\|_{L^2(\mathbb R^2)} + 
    \left( \sum_{\eta \in \Psi} \sum_{I \in \mathcal D^{+}} 
      | \langle f, \eta_I \rangle |^{\tau} \right)^{1/\tau} < \infty,
\end{equation*}
where $\mathcal D^{+}$ denotes the set of all dyadic cubes of measure $<1$ and
$P_0$ is a projector onto a suitable subspace of $L^{2}(\mathbb R^2)$.

From classical
regularity analysis of Grisvard \cite{grisvard92} we know that the eigenfunction $u \in H^{3/2}(\Omega)$.
In order to apply the above norm equivalence, we first extend $u$ to the 
whole space $\mathbb R^2$ by a Whitney extension. We will denote the corresponding extended function also by $u$; 
it  also has Sobolev regularity $3/2$, i.e. $u \in H^{3/2}(\mathbb R^2)$.

Now we proceed along the same lines as in \cite{dahlke99a}. The approximation in the
interior of $\Omega$ and on the coarsest scale $P_0(u)$ do not restrict the
Besov regularity. All what remains is to estimate the wavelet coefficients in 
the vicinity of the vertices. For that purpose we introduce the distance from 
a fixed vertex $x^{(i)}$ as
\begin{equation*}
  \delta_I := \inf_{x \in Q(I)} \|x - x^{(i)}\|_{\mathbb R^2},
\end{equation*}
where $Q(I)$ is a cube completely containing the support of $\eta_I$. Using the
approximation order of wavelets, see e.g. \cite{cohen03} we get
\begin{equation*}
  | \langle u, \eta_I \rangle | \lesssim 2^{-n j} |u|_{W^{n}_2(L^2(Q(I)))}.
\end{equation*}
If $\delta_I >0$ we can further estimate for each $\alpha$ such that $|\alpha| = n$
\begin{align*}
  \| \partial^{\alpha}u\|_{L^2(Q(I))}^2 &= \int_{Q(I)}|\partial^{\alpha} u|^2\; \mathrm d x
  \le \int_{Q(I)} \left( \frac{\rho^{n-1}}{\delta_{I}^{n-1}}\right)^2 |\partial^{\alpha} u|^2\; \mathrm d x \\
  & \lesssim (\delta_{I}^{1-n})^2 \|u\|_{V^{n}_{2,n-1}}^2,
\end{align*}
hence $\|u\|_{W^n(L^2(Q(I)))} \lesssim \delta_I^{1-n}$.

Next we sum up the contributions level by level. We introduce the set
of indices corresponding to level $j$ by
\begin{equation*}
  \Lambda_j:= \{ (I,\eta) , |I| = 2^{-2j}\}
\end{equation*}
and for each level appropriate layers
\begin{equation*}
  \Lambda_{j,k} := \{(I,\eta) \in \Lambda_j, \; k2^{-j}\le \delta_I < (k+1)2^{-j}\}.
\end{equation*}
The wavelets in the vicinity of the vertices $x^{(i)}$ have to be treated 
separately and we restrict ourselves to the set $\Lambda_{j,k}$ for $k\ge k_1$. 
Then it follows that
\begin{align*}
  \sum_{k=k_1}^{\infty} \sum_{(I,\eta)\in \Lambda_{j,k}} |\langle u, \eta_I\rangle|^{\tau}
  & \lesssim \sum_{k=k_1}^{\infty} \sum_{(I,\eta)\in \Lambda_{j,k}} 2^{-jn\tau} \delta_{I}^{(1-n)\tau} \\
  & \lesssim 2^{-j\tau} \sum_{k=k_1}^{\infty}k^{1+(1-n)\tau},
\end{align*}
since the cardinality of $\Lambda_{j,k}$ is proportional to $k$. Choosing
$n$ large enough ensures that the sum involving $k$ is finite. The summation
over the refinement levels $j$ then amounts to sum up a geometric series.

Now the coefficients in the vicinity of the vertices can be estimated as in
\cite{dd97}. Then finally we arrive at a regularity result in terms of
Besov spaces.

\begin{theorem}
  Let $u \in H^1_0(\Omega)$ be an eigenfunction for the Poisson eigenvalue
  problem of equation \eqref{eq:poisson}. Then
  \begin{equation*}
    u \in B^{\alpha}_{\tau}(L^{\tau}(\Omega)), \quad \textnormal{for all }
    \alpha \ge 0, \quad \textnormal{where } 1/\tau = \alpha/2 + 1/2.
  \end{equation*}
  \label{thm:besovreg}
\end{theorem}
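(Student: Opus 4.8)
The plan is to combine the arbitrarily high weighted Sobolev regularity of the eigenfunction $u$ established in Theorem \ref{thm:eigensobolev} with the norm equivalence between Besov spaces $B^{\alpha}_{\tau}(L^{\tau}(\mathbb R^2))$ and weighted discrete $\ell^{\tau}$-norms of Daubechies wavelet coefficients. First I would fix $\alpha \ge 0$, set $\tau$ by $1/\tau = \alpha/2 + 1/2$, and recall that by the norm equivalence it suffices to show $\|P_0(u)\|_{L^2} + \big(\sum_{\eta \in \Psi}\sum_{I \in \mathcal D^{+}} |\langle u, \eta_I\rangle|^{\tau}\big)^{1/\tau} < \infty$, where I first extend $u$ to all of $\mathbb R^2$ by a Whitney extension (using $u \in H^{3/2}(\Omega)$ from Grisvard \cite{grisvard92}), so that the extended function lies in $H^{3/2}(\mathbb R^2)$. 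The coarse-scale term $P_0(u)$ and the contributions from wavelets whose support cube $Q(I)$ stays away from all vertices are harmless: there the classical $H^{3/2}$ (indeed interior $C^\infty$) regularity already yields summable coefficients, exactly as in \cite{dahlke99a,dd97}.

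The core of the argument is the estimate of the wavelet coefficients near a fixed vertex $x^{(i)}$. For a wavelet $\eta_I = \eta_{j,k}$ with $\delta_I = \inf_{x \in Q(I)}\|x - x^{(i)}\|_{\mathbb R^2} > 0$, the approximation order of the Daubechies wavelets (enough vanishing moments) gives $|\langle u, \eta_I\rangle| \lesssim 2^{-nj}|u|_{W^n_2(L^2(Q(I)))}$ for $n$ up to the number of vanishing moments, and then the weighting argument, comparing the constant weight on $Q(I)$ with $\rho_i^{n-1}$ via $\rho_i \ge \delta_I$ on $Q(I)$, converts this into $|\langle u,\eta_I\rangle| \lesssim 2^{-nj}\delta_I^{1-n}\|u\|_{V^n_{2,n-1}}$, which is finite for every $n$ by Theorem \ref{thm:eigensobolev}. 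Next I would organize the sum by levels $\Lambda_j$ and by dyadic layers $\Lambda_{j,k}$ with $k2^{-j} \le \delta_I < (k+1)2^{-j}$, using that $\#\Lambda_{j,k} \sim k$, so that the layerwise sum is $\lesssim 2^{-j\tau}\sum_{k \ge k_1} k^{1 + (1-n)\tau}$; choosing $n$ large enough (permissible since $u$ has \emph{all} weighted Sobolev orders and the Daubechies wavelets can be taken with arbitrarily many vanishing moments) makes $1 + (1-n)\tau < -1$, so the $k$-sum converges, and summing the resulting geometric series in $j$ gives a finite total. The innermost wavelets ($k < k_1$, i.e. $\delta_I = 0$) are handled exactly as in \cite{dd97}.

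The main obstacle is the bookkeeping in the layer estimate: one must verify that, for the given $\tau$ tied to $\alpha$, the threshold $n$ dictated by $1 + (1-n)\tau < -1$ is actually achievable, i.e. that the wavelets have at least that many vanishing moments and that Theorem \ref{thm:eigensobolev} supplies the corresponding weighted Sobolev bound; since both the wavelet order and the available $V^l_{2,l-1}$-regularity are unbounded in $l$, this is only a matter of picking $n = n(\alpha)$ large enough, but it is the point where the argument genuinely uses the strength of Theorem \ref{thm:eigensobolev}. A secondary technical point is checking that the Whitney extension does not spoil the near-vertex weighted estimates — but since the cut-off functions $\xi_i$ localize near the vertices, which lie in $\overline{\Omega}$, the extension only affects the vertex-free region where $H^{3/2}$ regularity already suffices, so this causes no difficulty.
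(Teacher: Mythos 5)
Your proposal reproduces the paper's argument essentially verbatim: Whitney extension using Grisvard's $H^{3/2}$ regularity, the Daubechies wavelet norm equivalence for $B^{\alpha}_{\tau}(L^{\tau})$, the coefficient bound $|\langle u,\eta_I\rangle| \lesssim 2^{-nj}\delta_I^{1-n}\|u\|_{V^n_{2,n-1}}$ via the vanishing-moment estimate and the weight comparison $\rho_i \ge \delta_I$ on $Q(I)$, the level/layer decomposition $\Lambda_{j,k}$ with $\#\Lambda_{j,k}\sim k$ leading to $2^{-j\tau}\sum_k k^{1+(1-n)\tau}$, choice of $n=n(\alpha)$ large enough, and the reference to \cite{dd97} for the innermost layer $\delta_I=0$. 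The approach, the decomposition, and even the auxiliary observations (dependence of $n$ on $\alpha$, role of the Whitney extension) match the paper's proof.
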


\subsection{Numerical example}

The last paragraph showed that the eigenfunctions of our model problem have arbitrarily high
regularity in the sense of Besov spaces. However from classical 
regularity theory, the regularity  in terms of Sobolev spaces is 
restricted by the biggest inner angle. In particular for the
L-shaped domain the lowest eigenfunction can only be shown to be in
$H^s$ for $s<5/3$. This means that even for piecewise
linear hat functions the convergence rate of for uniform refinement
will be less than $N^{-1/3}$ in the $H^1$ norm, where $N$ is the number
of degrees of freedom.

In contrast to this, usage of a piecewise linear wavelet bases with two vanishing
moments in the adaptive algorithm PPINVIT will lead to an optimal convergence
rate of the eigenfunction in the $H^1$-norm with complexity $N^{-1/2}$ \cite{drsz08}.
Therefore, adaptive solution of the Poisson eigenvalue problem is superior to uniform grid refinement. 
To conclude this paper, we will demonstrate this for the test example of the Poisson 
eigenvalue problem on the L-shaped domain $\Omega=(-1,1)^2 \setminus [0,1]^2$.

The implementation of our eigenvalue solver is based on the adaptive wavelet code
described in \cite{vorloeper09}. The wavelet basis is constructed along the lines 
of \cite{ds99} which admits
a diagonal preconditioner. Moreover the set of active basis functions
is limited to the case where the successor of each wavelet is also contained
in the set. This results in the exact evaluation of an operator application
for a given index set; hence the Rayleigh quotient can be calculated 
exactly.

As an algorithm we used one PPINVIT\_STEP followed by a Galerkin eigenvalue
solution on the fixed index set with an appropriate accuracy followed
by a coarsening of the iterands. It turns out
that the constants from Sections 3 and 4 are too pessimistic and one can use much
smaller values.

%
\begin{figure}
  \begin{center}
    \includegraphics[width = 0.8 \textwidth]{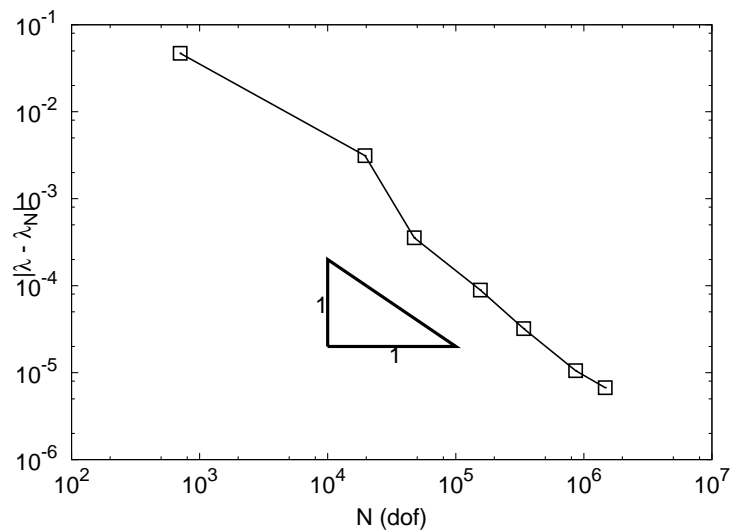}
  \end{center}
  \caption{\label{fig:convergence} Convergence of the Rayleigh quotient for
  the adaptive algorithm with respect to the degrees of freedom.}
\end{figure}
In figure \ref{fig:convergence} the error in the Rayleigh quotient in the smallest
eigenvalue is shown. As a reference value we used $\lambda_1 = 9.639723844$, see \cite{cg08}.
It can be seen that the error decreases like $N^{-1}$, which is as expected
twice as high as the rate for the corresponding eigenfunction.

Also of interest is the structure of the chosen wavelets which is shown 
in figure \ref{fig:index}. The plot shows the center of the active ansatz
functions during the sixth step for two different zoom levels. There one
can see the self similarity in the two scales.
\begin{figure}
  \begin{center}
    \includegraphics[width = 0.45 \textwidth]{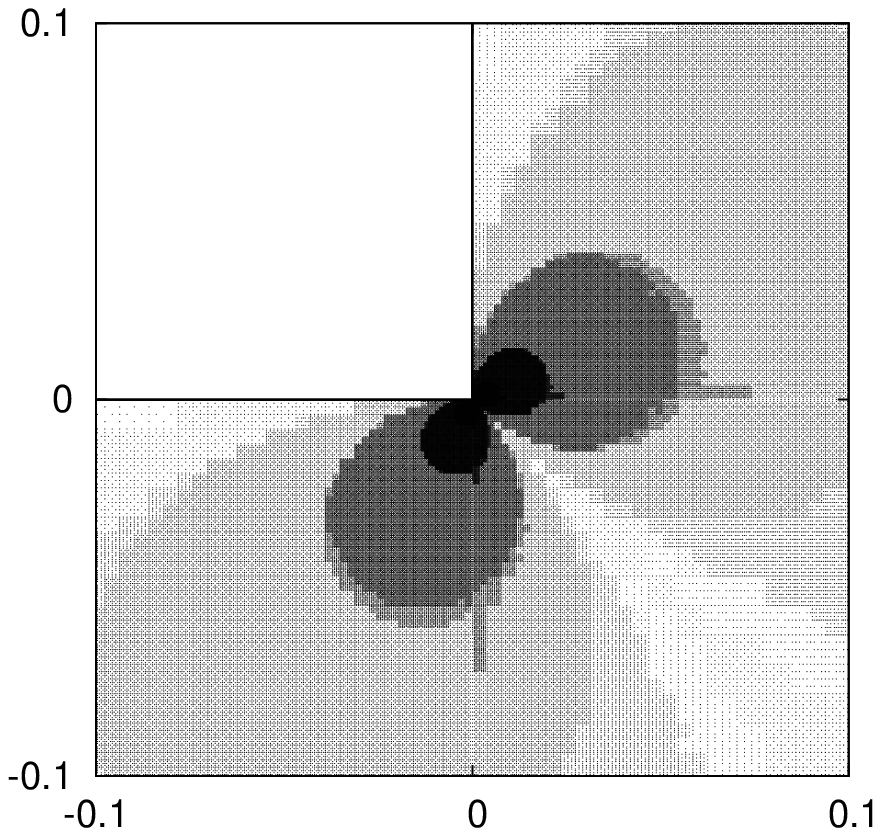}
    \hfill
    \includegraphics[width = 0.45 \textwidth]{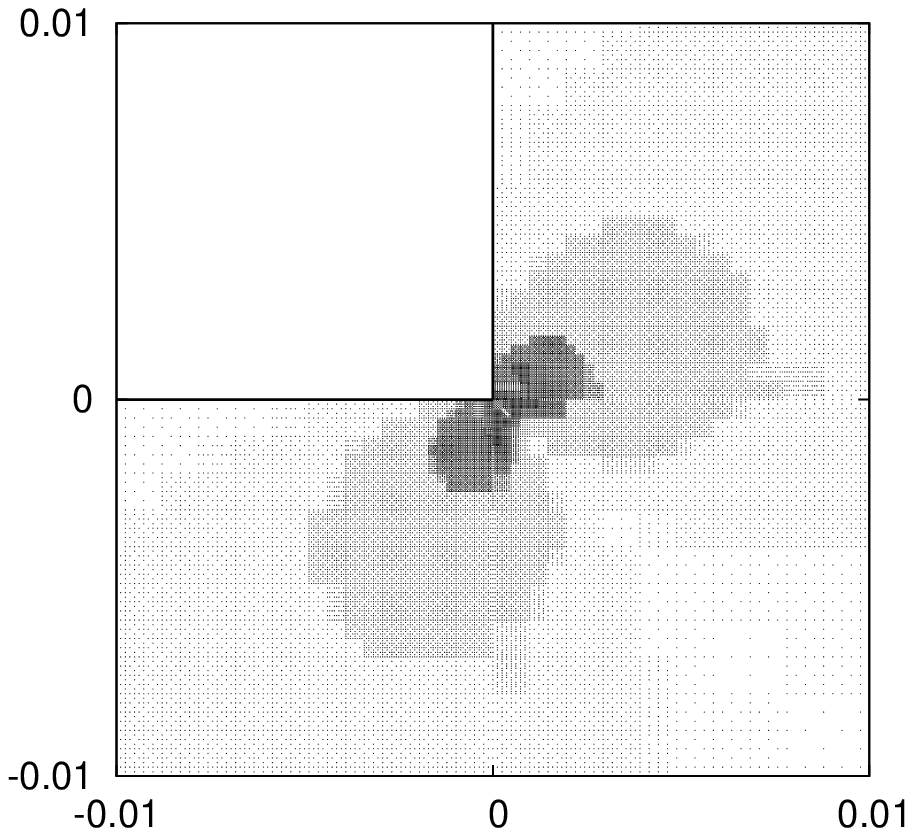}
  \end{center}
  \caption{\label{fig:index} Plot of the active wavelets
  in the sixth step for two different zoom levels. Each dot corresponds to a center of support 
  of an active wavelet.}
\end{figure}

\section*{Acknowledgment}

The authors would like to thank Wolfgang Dahmen, Klaus Neymeyr and Christoph Schwab for
discussions. We are also very thankful to J\"urgen Vorloeper for 
providing the basis of the adaptive wavelet code.


\begin{thebibliography}{10}
\providecommand{\url}[1]{{#1}}
\providecommand{\urlprefix}{URL }
\expandafter\ifx\csname urlstyle\endcsname\relax
  \providecommand{\doi}[1]{DOI~\discretionary{}{}{}#1}\else
  \providecommand{\doi}{DOI~\discretionary{}{}{}\begingroup
  \urlstyle{rm}\Url}\fi

\bibitem{bo91}
Babu\v{s}ka, I., Osborn, J.: Eigenvalue problems.
\newblock In: Handbook of Numerical Analysis, vol.~2, pp. 641--787.
  Elsevier-North Holland, Amsterdam (1991)

\bibitem{br01}
Becker, R., Rannacher, R.: {An optimal control approach to a posteriori error
  estimation in finite element methods.}
\newblock Acta Numerica \textbf{10}, 1--102 (2001)

\bibitem{bpk96}
Bramble, J.H., Pasciak, J.E., Knyazev, A.V.: {A subspace preconditioning
  algorithm for eigenvector/eigenvalue computation.}
\newblock Adv. Comput. Math. \textbf{6}(2), 159--189 (1996)

\bibitem{cg08}
Carstensen, C., Gedicke, J.: An oscillation-free adaptive fem for symmetric
  eigenvalue problems.
\newblock Tech. rep., DFG Research Center MATHEON (2008)

\bibitem{cohen03}
Cohen, A.: {Numerical analysis of wavelet methods.}
\newblock North-Holland (2003)

\bibitem{cdd01}
Cohen, A., Dahmen, W., DeVore, R.: {Adaptive wavelet methods for elliptic
  operator equations: Convergence rates.}
\newblock Math. Comput. \textbf{70}(233), 27--75 (2001)

\bibitem{dahlke99a}
Dahlke, S.: {Besov regularity for elliptic boundary value problems in polygonal
  domains.}
\newblock Appl. Math. Lett. \textbf{12}(6), 31--36 (1999)

\bibitem{dd97}
Dahlke, S., DeVore, R.A.: Besov regularity for elliptic boundary value
  problems.
\newblock Comm. Partial Differential Equations \textbf{22}, 1--16 (1997)

\bibitem{drsz08}
Dahmen, W., Rohwedder, T., Schneider, R., Zeiser, A.: Adaptive eigenvalue
  computation - complexity estimates.
\newblock Numer. Math. \textbf{110}, 277--312 (2008)

\bibitem{ds99}
Dahmen, W., Schneider, R.: {Wavelets on manifolds. I: Construction and domain
  decomposition.}
\newblock SIAM J. Math. Anal. \textbf{31}(1), 184--230 (1999)

\bibitem{daubechies92}
Daubechies, I.: {Ten lectures on wavelets.}
\newblock SIAM, Philadelphia (1992)

\bibitem{devore98}
DeVore, R.A.: {Nonlinear approximation.}
\newblock Acta Numerica \textbf{7}, 51--150 (1998)

\bibitem{do80}
D'yakonov, E., Orekhov, M.: {Minimization of the computational labor in
  determining the first eigenvalues of differential operators.}
\newblock Math. Notes \textbf{27}, 382--391 (1980)

\bibitem{ghs07}
Gantumur, T., Harbrecht, H., Stevenson, R.: An optimal adaptive wavelet method
  without coarsening of the iterands.
\newblock Math. Comp. \textbf{76}, 615--629 (2007)

\bibitem{gg07}
Giani, S., I.G.Graham: A convergent adaptive method for elliptic eigenvalue
  problems.
\newblock Tech. rep., Isaac Newton Institut, Cambridge (2007)

\bibitem{go76}
Godunov, S., Ogneva, V., Prokopov, G.: {On the convergence of the modified
  method of steepest descent in the calculation of eigenvalues.}
\newblock Am. Math. Soc., Translat., II. Ser. \textbf{105}, 111--116 (1976)

\bibitem{grisvard92}
Grisvard, P.: {Singularities in boundary value problems.}
\newblock Springer-Verlag (1992)

\bibitem{hr01}
Heuveline, V., Rannacher, R.: {A posteriori error control for finite element
  approximations of elliptic eigenvalue problems.}
\newblock Adv. Comput. Math. \textbf{15}(1-4), 107--138 (2001)

\bibitem{kamm07}
Kamm, C.: A posteriori error estimation in numerical methods for solving
  self-adjoint eigenvalue problems.
\newblock Master's thesis, TU Berlin (2007).
\newblock
  \urlprefix\url{http://www.math.tu-berlin.de/~kamm/articles/diplom-kamm.pdf}

\bibitem{kn08}
Knyazev, A., Neymeyr, K.: Gradient flow approach to geometric convergence
  analysis of preconditioned eigensolvers.
  \newblock SIAM J. Matrix Anal. \textbf{31}, 621--628 (2009)

\bibitem{kn03}
Knyazev, A.V., Neymeyr, K.: {A geometric theory for preconditioned inverse
  iteration. III: A short and sharp convergence estimate for generalized
  eigenvalue problems.}
\newblock Linear Algebra Appl. \textbf{358}(1-3), 95--114 (2003)

\bibitem{kmr97}
Kozlov, V.A., Maz\'ya, V.G., Rossmann, J.: Elliptic Boundary Value Problems in
  Domains with Point Singularities.
\newblock American Mathematical Society (1997)

\bibitem{neymeyr02}
Neymeyr, K.: {A posteriori error estimation for elliptic eigenproblems.}
\newblock Numer. Linear Algebra Appl. \textbf{9}(4), 263--279 (2002)

\bibitem{samokish58}
Samokish, B.: The steepest descent method for an eigenvalue problem with
  semi-bounded operators.
\newblock Izvestiya Vuzov, Math. \textbf{5}, 105--114 (1958).
\newblock (In Russian)

\bibitem{vorloeper09}
Vorloeper, J.: {Adaptive Wavelet Methoden für Operator Gleichungen -
  Quantitative Analyse und Softwarekonzepte}.
\newblock Ph.D. thesis, RWTH Aachen (in preparation)

\end{thebibliography}
\end{document}